\theoremstyle{plain}
\newtheorem{theorem}{Theorem}[section]
\newtheorem{prop}[theorem]{Proposition}
\newtheorem{lemma}[theorem]{Lemma}
\newtheorem{corollary}[theorem]{Corollary}
\newtheorem{conjecture}[theorem]{Conjecture}
\newtheorem{question}[theorem]{Question}
\theoremstyle{remark}
\newtheorem{remark}[theorem]{Remark}
\theoremstyle{definition}
\newtheorem{definition}[theorem]{Definition}
\newtheorem{example}[theorem]{Example}
\newcommand{\ZZ}{{\mathbb{Z}}}
\newcommand{\cT}{{\mathcal{T}}}
\newcommand{\fS}{{\mathfrak{S}}}
\newcommand{\GL}{{\mathop{\mathrm{GL}}}}
\newcommand{\Cat}{\mathop{\mathrm{Cat}}}
\newcommand{\wt}{\mathop{\mathrm{wt}}}
\title[Determinantal identities]{Determinantal identities for flagged Schur and Schubert polynomials}
\author{Grigory Merzon}
\address{Moscow Center for Continuous Mathematical Education, Bolshoi Vlassievskii per., 11, 119002 Moscow, Russia }
\email{merzon@mccme.ru}
\author{Evgeny Smirnov}
\address{Department of Mathematics \& Laboratory of Algebraic Geometry and its Applications, National Research University Higher School of Economics, Vavilova st., 7, 119312 Moscow, Russia}
\address{Independent University of Moscow, Bolshoi Vlassievskii per., 11, 119002 Moscow, Russia }
\email{esmirnov@hse.ru}
\begin{document}

\begin{abstract}
We prove new determinantal identities for a family of flagged Schur polynomials. As a corollary of these identities we obtain determinantal expressions of Schubert polynomials for certain vexillary permutations. 
\end{abstract}

\maketitle

\section{Introduction}

Schur polynomials  were defined by Issai Schur \cite{Schur1901} as characters of irreducible representations of $\GL(n)$; they are symmetric polynomials in $n$ variables, indexed by Young diagrams $\lambda$ with at most $n$ rows. They form a basis in the ring of all symmetric polynomials. This basis plays a prominent role in various algebraic problems.

Schur polynomials also have a  combinatorial definition: they are obtained as sums of monomials indexed by semistandard Young tableaux of shape $\lambda$. These are fillings of boxes of $\lambda$ by nonnegative integers satisfying certain conditions. This construction is due to Specht \cite{Specht35}, who showed that such tableaux index a basis in the representation of $\GL(n)$ corresponding to $\lambda$.

The classical Jacobi--Trudi formula provides a determinantal expression for Schur polynomials via \emph{complete symmetric functions}: for a partition $\lambda=(\lambda_1,\lambda_2,\dots,\lambda_m)$, the corresponding Schur polynomial $s_\lambda(x_1,\dots,x_n)$ is equal to
\[
s_\lambda(x_1,\dots,x_n)=\det\left( h_{\lambda_i-i+j}\right)_{i,j=1}^m,
\]
where 
\[
h_k=\sum_{i_1\leq \dots\leq i_k} x_{i_1}\dots x_{i_k}
\]
is the $k$-th complete symmetric function, i.e., the sum of all monomials of degree $k$ in variables $x_1,\dots,x_n$.

In 1982, A.~Lascoux and M.-P.~Sch\"utzenberger generalized the notion of Schur polynomials, defining \emph{flagged Schur polynomials}. Their definition is similar to the combinatorial definition of Schur polynomials, with the only difference: they put extra constraints on the entries occuring in semistandard Young tableaux, encoded by a sequence of integers referred to as a \emph{flag}. These polynomials are not symmetric anymore. However, they also satisfy an analogue of the Jacobi--Trudi identity, due to I.~Gessel and M.~Wachs. 

Flagged Schur polynomials were defined because of their relation to \emph{Schubert polynomials}. This is another remarkable family of polynomials. It is also due to Lascoux and Sch\"utzenberger. These polynomials are indexed by permutations; they represent cohomology classes of Schubert cycles in flag varieties. Just as Schur polynomials, they admit an algebraic definition (we recall it in Subsection~\ref{ssec:defschubert}), as well as a combinatorial one, due to S.~Fomin and An.~Kirillov~\cite{FominKirillov96}. The latter definition uses certain combinatorial objects called \emph{pipe dreams}, or \emph{rc-graphs}. It turns out that for a ``nice'' family of permutations, known as \emph{vexillary permutations}, Schubert polynomials are equal to some flagged Schur polynomials. This gives determinantal expressions for Schubert polynomials of vexillary permutations.

In this paper we consider flagged Schur polynomials with flags of some special type (the so-called $h$-flagged ones). We show that for these polynomials there is a formula expressing them as determinants whose entries are 1-flagged Schur polynomials, divided by certain monomial denominators. This determinantal formula is different from the generalized Jacobi--Trudi formula discussed above.

 The proof of our formula is based on the well-known Lindstr\"om--Gessel--Viennot lemma. We present flagged Schur polynomials as weighted sums over tuples of nonintersecting lattice paths and express these sums as determinants using this lemma.

Then we reinterpret our formula in terms of Schubert polynomials. It expresses the Schubert polynomial of an $h$-shifted dominant permutation as a determinant whose entries are Schubert polynomials of 1-shifted dominant permutations. In particular, this gives determinantal formulas for the Schubert polynomials corresponding to the conjecturally ``most singular'' Schubert varieties.

\subsection*{Structure of the paper} This paper is organized as follows. In Section~\ref{sec:flagged} we give the definition of flagged Schur varieties and recall the generalized Jacobi--Trudi identity. In Section~\ref{sec:another} we present a path interpretation of $h$-flagged Schur polynomials and prove our main result, Theorem~\ref{thm:flagged}. To make our exposition complete, in Subsection~\ref{ssec:LGV} we give  a proof of our main combinatorial tool, the Lindstr\"om--Gessel--Viennot lemma. Section~\ref{sec:schubert} is devoted to Schubert polynomials. We recall their definition, discuss their relation with flagged Schur polynomials and reformulate our main result in terms of Schubert polynomials (Theorem~\ref{thm:mainschubert}). Finally, in Section~\ref{sec:speculations} we speak about its applications to some special classes of permutations and discuss its (mostly conjectural) relations to the geometry of Schubert varieties.

\section{Flagged Schur polynomials and the Jacobi--Trudi identity}\label{sec:flagged}

\subsection{Definition of flagged Schur polynomials} We start with recalling the definition of flagged Schur polynomials, introduced by A.~Lascoux and M.-P.~Sch\"utzenberger \cite{LascouxSchutzenberger82}. 

Fix two positive integers $m\leq n$. Let $\lambda=(\lambda_1\geq \dots\geq\lambda_m>0)$ be a partition and let $b=(b_1\leq b_2\leq\dots\leq b_m=n)$ a sequence of nonstrictly increasing positive integers. A \emph{flagged semistandard Young tableau} $T$ of shape $\lambda$ and flags $b$ is an array $t_{ij}$ of positive integers $t_{ij}$, $1\leq j\leq\lambda_i$, $1\leq i\leq m$, such that $1\leq t_{ij}\leq t_{i,j+1}\leq b_i$ and $t_{ij}<t_{i+1,j}$. In other words, the boxes of a Young diagram of shape $\lambda$ are filled by numbers $t_{ij}$ non-strictly increasing along the rows and strictly increasing along the columns, in such a way that the numbers in the $i$-th row do not exceed $b_i$. Denote the set of all such tableaux by $\cT(\lambda,b)$. To each tableau $T$ we assign a monomial $M(T)=x_1^{p_1}\dots x_n^{p_n}$, where $p_k$ is the number of entries in $T$ equal to $k$.

\begin{definition}
A \emph{flagged Schur polynomial} of shape $\lambda$ and flags $b$ is defined as 
\[
 s_\lambda(b)=\sum_{T\in \cT(\lambda,b)} M(T)\in \ZZ[x_1,\dots,x_n].
\]
\end{definition}


\begin{example} If $b_1=\dots=b_m=n$, then $\cT(\lambda,b)$ is the set of semistandard Young tableaux of shape $\lambda$ whose entries do not exceed $n$. In this case $s_\lambda(b)$ is just the usual Schur polynomial $s_\lambda(x_1,\dots,x_n)$ in $n$ variables.
\end{example}

\subsection{$h$-flagged Schur polynomials} We will be mostly interested in flagged Schur polynomials with flags of a special form.

\begin{definition} Let $h$ be a positive integer, $b=(h+1,h+2,\dots, h+m)$. Then 
$s_{\lambda}^{(h)}=s_\lambda(b)$ is called an \emph{$h$-flagged Schur polynomial of shape $\lambda$}. We will also speak about \emph{$h$-flagged semistandard Young tableaux of shape $\lambda$}.
\end{definition}

\begin{example}\label{1-flagged}
$1$-flagged semistandard Young tableaux of shape $\lambda$ correspond to Young subdiagrams $\mu\subset\lambda$. Indeed, let $T$ be such a tableau. The entries $t_{ij}$ in each ($i$-th) row of $T$ are equal either to $i$ or to $i+1$. The union of all boxes $(i,j)\in T$ such that $t_{ij}=i$ forms a subdiagram $\mu$, and $\mu$ uniquely determines the tableau $T$.
\end{example}

\begin{example} $h$-flagged semistandard Young tableaux of shape $\lambda$ correspond to $h$-tuples 
\[
\mu_1\subseteq\mu_2\subseteq\dots\subseteq\mu_h\subseteq\lambda
\] 
of embedded Young diagrams, where $\mu_\ell=\{(i,j)\mid t_{ij}\leq i-1+\ell\}$. Alternatively, replacing each entry $t_{ij}$ by $t_{ij}'=h+i-t_{ij}$ transforms a semistandard Young tableau $T$ into a \emph{plane partition} $T'$, i.e. a tableau of the same shape whose nonnegative entries \emph{non-strictly decrease} along both rows and columns. Such tableau can be viewed as a three-dimensional Young diagram of height at most $h$ whose base fits in $\lambda$.
\end{example}

\begin{example} $s_{(2,1)}^{(1)}(x_1,x_2,x_3)=x_1^2x_2+x_1^2x_3+x_1x_2x_3+x_1x^2_2+x_2^2x_3$. The set of 1-flagged Young tableaux for the diagram $(2,1)$ is as follows:
\[
\young(11,2)\qquad\young(11,3)\qquad\young(12,3)\qquad\young(12,2)
\qquad \young(22,3)
\]
\end{example}


\subsection{Generalized Jacobi--Trudi identity} The following determinantal expression for flagged Schubert polynomials is well known. It appeared first in the paper \cite{Gessel82} by  I.~Gessel. Its proof is also given in M.~Wachs's paper \cite[Theorem~1.3]{Wachs85}. In their paper~\cite{ChenLiLouck02}, Chen, Li and Louck generalize this identity for the case of double flagged Schur  polynomials and give yet another its proof, using the lattice path interpretation of these polynomials and the Lindstr\"om--Gessel--Viennot lemma.

\begin{theorem}[\cite{Gessel82}, \cite{Wachs85}, \cite{ChenLiLouck02}]
\label{genJT} Let $s_\lambda(b)$ be a flagged Schur function of shape $\lambda$ and flags $b$. Then
\[
 s_\lambda(b)=\det(h_{\lambda_i-i+j}(b_i))_{1\leq i,j\leq m},
\]
where 
\[
h_d(k)= \sum_{1\leq i_1\leq\dots\leq i_d\leq k} x_{i_1}x_{i_2}\dots x_{i_d}
\]
is the complete symmetric function of degree $d$ in $k$ variables, $h_0=1$, and $h_\ell=0$ for $\ell<0$.
\end{theorem}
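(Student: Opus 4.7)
The plan is to prove Theorem~\ref{genJT} by the Lindström--Gessel--Viennot lattice path method, following the approach of Chen--Li--Louck cited above. The strategy has three steps: encode flagged semistandard tableaux as tuples of non-intersecting lattice paths, apply the LGV lemma to identify their generating function with a determinant, and compute the individual matrix entries as complete symmetric functions.

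First I would set up sources $A_i = (1-i,\, 1)$ and sinks $B_j = (\lambda_j - j + 1,\, b_j)$ for $i,j = 1, \dots, m$. Consider directed lattice paths in $\ZZ^2$ built from unit north and east steps, with each east step from $(a,k)$ to $(a+1,k)$ weighted by $x_k$ and each north step weighted by $1$; the weight of a path is the product of the weights of its steps. I would then establish a weight-preserving bijection between $m$-tuples of non-intersecting paths $(P_1, \dots, P_m)$ with $P_i\colon A_i \to B_i$ and flagged semistandard tableaux $T \in \cT(\lambda, b)$: the $i$-th row $t_{i,1} \leq \dots \leq t_{i,\lambda_i}$ records the sequence of heights of the east steps of $P_i$, the horizontal shift $1-i$ of the source together with the non-intersection condition translates precisely into the strict column-wise inequality $t_{ij} < t_{i+1,j}$, and the upper bound $b_i$ on the entries of row $i$ is imposed by the sink height of $P_i$. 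Under this bijection the weight of the tuple matches the monomial $M(T)$, so the generating function of non-intersecting tuples equals $s_\lambda(b)$.

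Next I would apply the Lindström--Gessel--Viennot lemma, which asserts
\[
\det(e_{ij})_{1 \leq i, j \leq m} \;=\; \sum_{\sigma \in \fS_m} \mathrm{sgn}(\sigma) \sum_{(P_1, \dots, P_m)} \prod_{i=1}^m \wt(P_i),
\]
where $e_{ij}$ is the total weight of all directed paths from $A_i$ to $B_j$ and the inner sum runs over non-intersecting tuples with $P_i\colon A_i \to B_{\sigma(i)}$. A single path from $A_i$ to $B_j$ consists of exactly $\lambda_j - j + i$ east steps whose heights form a weakly increasing sequence in $\{1, \dots, b_j\}$ (freely interleaved with $b_j - 1$ north steps), so summing the weights yields $e_{ij} = h_{\lambda_j - j + i}(b_j)$; in the degenerate case $\lambda_j - j + i < 0$ no such path exists and $e_{ij} = 0 = h_{\lambda_j - j + i}$. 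Transposing the matrix rewrites the determinant in the form $\det(h_{\lambda_i - i + j}(b_i))$ stated in the theorem.

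The principal obstacle is showing that the only permutation contributing to the LGV expansion is the identity. Because the sources have strictly decreasing $x$-coordinates at the constant height $y = 1$, while the sinks have strictly decreasing first coordinates $\lambda_j - j + 1$ (by the partition condition) at weakly increasing heights $y = b_j$, a standard compatibility argument shows that for any $\sigma \neq \Id$ every tuple of paths with $P_i\colon A_i \to B_{\sigma(i)}$ contains an intersecting pair: whenever $i < k$ and $\sigma(i) > \sigma(k)$, the horizontal range of $P_i$ is forced to be contained in that of $P_k$, so the two NE-monotone paths must share a lattice point. This collapses the signed sum to the single identity term, which by the bijection equals $s_\lambda(b)$, completing the proof.
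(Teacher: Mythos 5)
Your proof is correct. The paper does not actually prove Theorem~\ref{genJT}---it quotes it from \cite{Gessel82}, \cite{Wachs85}, \cite{ChenLiLouck02}---but it explicitly describes the Chen--Li--Louck proof as resting on the lattice-path interpretation together with the Lindstr\"om--Gessel--Viennot lemma, which is exactly the argument you give (and the same LGV machinery the paper develops in Subsection~\ref{ssec:LGV} for its own Theorem~\ref{thm:flagged}); the only spot you compress is the compatibility step, where containment of horizontal ranges alone does not force a crossing---one also needs that $P_i$ starts at the minimal height $1$ and ends at height $b_{\sigma(i)}\geq b_{\sigma(k)}$, so that it passes from weakly below to weakly above $P_k$---but both of these facts are already present in your sketch.
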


\begin{corollary} For $b=(n,\dots,n)$, this is the classical Jacobi--Trudi formula for Schur polynomials in $n$ variables:
\[
 s_\lambda(x_1,\dots,x_n)=\det(h_{\lambda_i-i+j}(x_1,\dots,x_n))_{1\leq i,j\leq m}.
\]
\end{corollary}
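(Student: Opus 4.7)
The plan is to derive this corollary as a direct specialization of Theorem~\ref{genJT} with the constant flag $b=(n,n,\dots,n)$, so there is nothing substantial to prove, only two identifications to verify. First, I would appeal to the example immediately following the definition of flagged Schur polynomials: when $b_i=n$ for all $i$, the set $\cT(\lambda,b)$ is precisely the set of semistandard Young tableaux of shape $\lambda$ with entries in $\{1,\dots,n\}$, so $s_\lambda(b)=s_\lambda(x_1,\dots,x_n)$ by definition. Second, the notation $h_d(k)$ from Theorem~\ref{genJT} denotes the complete symmetric function in the first $k$ variables; thus $h_d(b_i)=h_d(n)$ is identically $h_d(x_1,\dots,x_n)$, independently of the row index $i$.

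Given these two identifications, the right-hand side of Theorem~\ref{genJT} becomes
\[
\det\bigl(h_{\lambda_i-i+j}(x_1,\dots,x_n)\bigr)_{1\leq i,j\leq m},
\]
and the left-hand side becomes $s_\lambda(x_1,\dots,x_n)$, which is exactly the desired classical Jacobi--Trudi formula.

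There is no genuine obstacle here: the entire content of the corollary lies in Theorem~\ref{genJT}, and the reduction to the classical case is purely a matter of matching conventions. The one point worth a brief remark is that the flag condition $b_i\leq b_{i+1}$ is satisfied trivially in the constant case, so Theorem~\ref{genJT} does apply.
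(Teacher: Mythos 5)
Your proposal is correct and matches the paper's (implicit) argument exactly: the corollary is just the specialization of Theorem~\ref{genJT} to the constant flag $b=(n,\dots,n)$, using that $\cT(\lambda,b)$ then consists of all semistandard tableaux with entries at most $n$ and that $h_d(b_i)=h_d(n)$ no longer depends on $i$. Nothing further is needed.
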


\begin{remark}
The ``path interpretation'' of the Jacobi--Trudi formula and its proof using the Lindstr\"om--Gessel--Viennot lemma can be found, for instance, in \cite[7.16]{Stanley2}.
\end{remark}

\section{Another determinantal formula for flagged Schur polynomials}\label{sec:another}

\subsection{Path interpretation of 1-flagged Schur polynomials} Let $\lambda=(\lambda_1,\dots,\lambda_{n})$ be a fixed partition. Consider an oriented graph $G_\lambda$ whose vertices are lattice points inside $\lambda$ or at its boundary, and edges are given by vertical and horizontal line segments joining the neighboring lattice points. The orientation on the edges is as follows: the horizontal and the vertical edges are directed towards east and north, respectively.

Let us assign weights to the edges of $G_\lambda$ in the following way: to each horizontal edge $(r,-s)-(r+1,-s)$ we assign the weight $x^{-1}_{s+1}$, as shown on Fig.~\ref{fig:weights}, while all vertical edges have weight 1. For a path $P$, its weight $\wt P$ is defined as the product of the weights of all its edges.

\begin{definition}
Let $A$ and $B$ be two vertices of $G_\lambda$. The \emph{partition function} $Z_\lambda(A,B)$ is defined as the weighted sum over all paths from $A$ to $B$:
\[
Z_\lambda(A,B)=\sum_{P\colon A\to B}\wt P.
\]
\end{definition}

The following easy proposition expresses 1-flagged Schur polynomials via partition functions

\begin{prop}\label{prop:oneflag}
\[
s_{\lambda}^{(1)}(x_1,\dots,x_{n+1}) =x_1^{\lambda_1}x_2^{\lambda_1}\dots x_{n+1}^{\lambda_n}Z_\lambda(A,B).
\]
\end{prop}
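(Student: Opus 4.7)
The plan is to expand both sides of the claimed identity as sums over the same combinatorial index set and match terms. On the left, Example~\ref{1-flagged} identifies the $1$-flagged SSYT of shape $\lambda$ with the Young subdiagrams $\mu\subseteq\lambda$: the tableau $T_\mu$ has $\mu_i$ entries equal to $i$ and $\lambda_i-\mu_i$ entries equal to $i+1$ in its $i$-th row, so
\[
M(T_\mu)=\prod_{i=1}^{n} x_i^{\mu_i}\,x_{i+1}^{\lambda_i-\mu_i}.
\]
On the right I would take $A=(0,-n)$ and $B=(\lambda_1,0)$ and use the standard staircase bijection between east/north lattice paths in $G_\lambda$ and subdiagrams $\mu\subseteq\lambda$: send $\mu$ to the path $P_\mu$ that runs vertically through row $i$ at $x=\mu_i$ and horizontally at height $y=-i$ from $x=\mu_{i+1}$ to $x=\mu_i$, using the conventions $\mu_0=\lambda_1$ and $\mu_{n+1}=0$. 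The inclusion $\mu\subseteq\lambda$ is exactly the condition that $P_\mu$ stays inside $G_\lambda$, and every path from $A$ to $B$ arises this way.

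Next I would compute $\wt(P_\mu)$ row by row. Since each horizontal edge at height $-i$ has weight $x_{i+1}^{-1}$ and the horizontal stretch of $P_\mu$ at that height has $\mu_i-\mu_{i+1}$ edges, one reads off
\[
\wt(P_\mu)=x_1^{-(\lambda_1-\mu_1)}\prod_{j=2}^{n} x_j^{-(\mu_{j-1}-\mu_j)}\cdot x_{n+1}^{-\mu_n}.
\]
Multiplying by the prefactor $F:=x_1^{\lambda_1} x_2^{\lambda_1} x_3^{\lambda_2}\cdots x_{n+1}^{\lambda_n}$ produces a monomial whose exponent of $x_1$ is $\mu_1$, whose exponent of $x_j$ for $2\leq j\leq n$ is $\lambda_{j-1}-\mu_{j-1}+\mu_j$, and whose exponent of $x_{n+1}$ is $\lambda_n-\mu_n$. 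These match exactly the three contributions to $M(T_\mu)$ counted row by row: row $j$ supplies $\mu_j$ copies of $x_j$, row $j-1$ supplies $\lambda_{j-1}-\mu_{j-1}$ further copies of $x_j$, and row $n$ supplies $\lambda_n-\mu_n$ copies of $x_{n+1}$. Hence $M(T_\mu)=F\cdot\wt(P_\mu)$ term by term, and summing over $\mu\subseteq\lambda$ gives $s_\lambda^{(1)}=F\cdot Z_\lambda(A,B)$ as required.

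The argument is essentially exponent bookkeeping once the bijection is fixed, so there is no conceptually hard step. The point that requires care is the orientation of the staircase bijection: one must check that the degenerate path hugging the north-west boundary of $\lambda$ corresponds to $\mu=\emptyset$ (i.e.\ the tableau whose $i$-th row is filled by copies of $i+1$), and not to $\mu=\lambda$. Once this convention is pinned down the row-by-row exponent match is mechanical, and the resulting path interpretation will be the key input that allows the Lindström--Gessel--Viennot lemma to convert $h$-flagged Schur polynomials into determinants in Theorem~\ref{thm:flagged}.
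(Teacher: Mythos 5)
Your proof is correct and follows the same route as the paper: both identify $1$-flagged tableaux with subdiagrams $\mu\subseteq\lambda$, identify these with monotone lattice paths in $G_\lambda$, and verify $M(T_\mu)=F\cdot\wt P_\mu$. The only (cosmetic) difference is that the paper checks this last identity by induction on $|\mu|$ (base case $\mu=\varnothing$, each added box in row $i$ multiplying the weight by $x_i/x_{i+1}$), whereas you do it by direct row-by-row exponent bookkeeping.
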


\begin{proof} As it was discussed in Example~\ref{1-flagged}, the Young subdiagrams $\mu\subset \lambda$ correspond to 1-flagged semistandard Young tableaux of the shape $\lambda$. Each of these tableaux defines a monomial in $x_1,\dots,x_{n+1}$, which we denote by $x^\mu$. On the other hand, let  $P_\mu$ be the path leading from $A$ to $B$ and bounding the diagram $\mu$ from below. Let us prove that 
\[
x^\mu=x_1^{\lambda_1}x_2^{\lambda_1}\dots x_{n+1}^{\lambda_n}\wt P_\mu.
\] 
And this can be proven by induction on the number of boxes in $\mu$: for $\mu=\varnothing$, the weight of the corresponding tableau is equal to $x_2^{\lambda_1}x_3^{\lambda_2}\dots x_{n+1}^{\lambda_n}=x_1^{\lambda_1}x_2^{\lambda_1}\dots x_{n+1}^{\lambda_n}/x_1^{\lambda_1}$. Adding a box in the $i$-th row of $\mu$ results in multiplying this weight by $x_i/x_{i+1}$.
\end{proof}

Let us introduce the following notation. Let $\mu=(\mu_1,\dots,\mu_m)$ be a Young diagram, and $(y_1,\dots,y_m)$ an (ordered) sequence of variables, with the number of variables equal to the number of rows in $\mu$. We set
\[
\underline{(y_1,\dots,y_m)}^\mu:=y_1^{\mu_1}\dots y_m^{\mu_m}.
\]
With this notation, Proposition~\ref{prop:oneflag} can be rewritten as follows:
\[
s_\lambda(x_1,\dots,x_{m+1})=x_1^{\lambda_1}\underline{(x_2,\dots,x_{n+1})}^\lambda \cdot Z_\lambda(A,B).
\]

\begin{figure}[h!]
\begin{tikzpicture}[nodes={font=\scriptsize}]
\draw (0,0) grid (5,4);
\foreach \y in {5,...,1}
	\foreach \x in {0,...,4}
		\node at (\x+.5,5-\y+.2) {$x^{-1}_{\y}$};
\fill[white] (2,-0.5)--(2,2)--(3,2)--(3,3)--(5.5,3)--(5.5,-0.5)--cycle;
\draw (0,0)--(2,0)--(2,2)--(3,2)--(3,3)--(5,3)--(5,4);

\begin{scope}[ultra thick, rounded corners]
\draw[red] 
(0,0)--(0,1)--(2,1)--(2,3)--(3,3)--(3,4)--(5,4);
\end{scope}
\draw[fill] (0,0) circle [radius=0.1];
\node[below right] at (0,0) {$A$};
\draw[fill] (5,4) circle [radius=0.1];
\node[below right] at (5,4) {$B$};

\end{tikzpicture}
\caption{Correspondence between subdiagrams and paths}\label{fig:weights}
\end{figure}
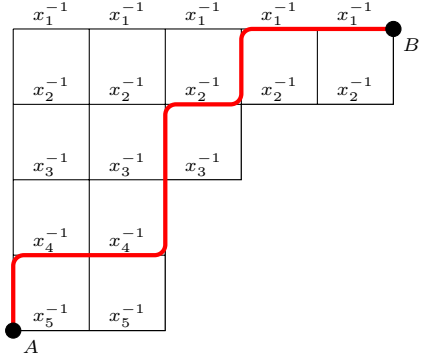

\begin{example} The diagram $\lambda=(5,3,2,2)$ with the weights on its edges is shown on  Figure~\ref{fig:weights}. Consider the path $P_\mu$ drawn in red; its weight equals $\wt P_\mu=x_1^{-2}x_2^{-1}x_4^{-2}$. The corresponding summand in the flagged Schur polynomial is then equal to
\[
 x_1^5x_2^5x_3^3x_4^2x_5^2\cdot\wt P_\mu=x_1^3x_2^4x_3^3x_5^2.
\]
The Young tableau corresponding to $\mu$ is as follows:
\[
 \young(11122,223,33,55)
\]
\end{example}

\subsection{Expressing $h$-flagged Schur polynomials via $1$-flagged Schur polynomials} 

In this subsection we give a lattice path expression for $h$-flagged Schur polynomials, different from the one given in Theorem~\ref{genJT}. Using the Lindstr\"om--Gessel--Viennot lemma, we express them as determinants involving 1-flagged polynomials from the previous subsection.

As before, let $\lambda=(\lambda_1,\dots,\lambda_m)$ be a Young tableau, and let $h\geq 1$. We consider $h$-flagged Young tableaux: the tableaux with flags $(h+1,\dots,h+m)$. To each such tableau $T$ we can assign an $h$-tuple of lattice paths drawn in the fourth quarter of the plane that start at the point $A=(0,-m)$, end at $B=(\ell,0)$, where $\ell=\lambda_1$, and pass inside the diagram $\lambda$. These paths are constructed in the following way. For $k\leq h$, denote by $\mu_k$ the set of boxes $t_{ij}$ of $T$ such that $t_{ij}\leq i+k-1$. We thus obtain a set of embedded Young tableaux $\mu_1\subset\mu_2\subset\dots\subset \mu_h\subset\lambda$. Consider the set of lattice paths $P_1,\dots,P_h$ bounding these diagrams; they pass non-strictly above one another, and all of them are bounded by $\lambda$ from below.

\begin{example} Figure~\ref{fig:twopaths} below shows a 2-flagged Young tableau for the diagram $\lambda=(5,4,4,2)$.
It corresponds to the pair of the blue and the red paths, shown at the figure on the right.
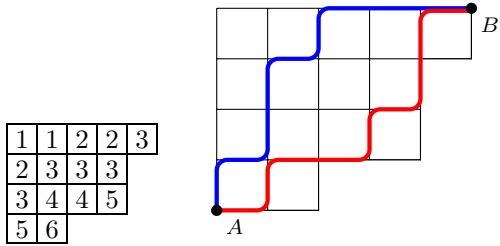
\begin{figure}[h!]
 $\young(11223,2333,3445,56)$\qquad 
\begin{tikzpicture}[font=\footnotesize,scale=0.67]
\draw (0,0) grid (5,4);
\fill [white](2.01,-0.05)--(5.5,-0.05)--(5.5,2.99)--(4.01,2.99)--(4.01,0.99)--(2.01,0.99)--cycle;
\draw (0,0)--(2,0)--(2,1)--(4,1)--(4,3)--(5,3)--(5,4);
\begin{scope}[ultra thick, rounded corners]
\draw[blue] 
(0,0)--(0,1)--(1,1)--(1,3)--(2,3)--(2,4)--(5,4);
\draw[red]
(0,0)--(1,0)--(1,1)--(3,1)--(3,2)--(4,2)--(4,3.95)--(5,3.95);
\end{scope}
\draw[fill] (0,0) circle [radius=0.1];
\node[below right] at (0,0) {$A$};
\draw[fill] (5,4) circle [radius=0.1];
\node[below right] at (5,4) {$B$};
\end{tikzpicture}
\caption{Paths corresponding to a 2-flagged tableau}\label{fig:twopaths}
\end{figure}
\end{example}

Now let us shift each ($k$-th) path to the vector $(k-1,-k+1)$. This will give us an $h$-tuple of \emph{non-crossing} paths. The $k$-th path $P_k$ will then start at the point $A_k=(k-1,-m-k+1)$ and end at $B_k=(\ell+k-1,-k+1)$.

Introduce weights at the horizontal segments of the lattice as in the previous example. 
One can easily see that the monomial corresponding to $T$ is nothing but
\[
 x^T=\left[(x_1\dots x_{h})^{\lambda_1}\cdot\underline{(x_{h+1},\dots,x_{h+m})}^\lambda\right]\cdot\wt P_1\dots\wt P_h.
\]
Note that the multiple in front of the product of the weights is the inverse to the product of the weights of \emph{all} edges in this diagram.

\begin{figure}[h!]
\begin{tikzpicture}[nodes={font=\scriptsize}]
\fill[black!15!white] (0,0)--(1,0)--(1,4)--(6,4)--(6,5)--(0,5)--cycle;
\draw (0,0) grid (6,5);
\foreach \y in {6,...,1}
	\foreach \x in {0,...,5}
		\node at (\x+.5,6-\y+.2) {$x^{-1}_{\y}$};
\fill [white] (3,-0.5)--(6.5,-0.5)--(6.5,3)--(5,3)--(5,1)--(3,1)--cycle;
\draw (6,5)--(6,3)--(5,3)--(5,1)--(3,1)--(3,0)--(0,0);
\begin{scope}[ultra thick, rounded corners]
\draw[blue] 
(0,1)--(0,2)--(1,2)--(1,4)--(2,4)--(2,5)--(5,5);
\draw[red]
(1,0)--(2,0)--(2,1)--(4,1)--(4,2)--(5,2)--(5,4)--(6,4);
\draw[blue, dashed] (0,0)--(0,1);
\draw[blue,dashed] (5,5)--(6,5);
\end{scope}

\draw[fill] (1,0) circle [radius=0.1];
\node[below right] at (1,0) {$A_2$};
\draw[fill] (6,4) circle [radius=0.1];
\node[below right] at (6,4) {$B_2$};
\draw[fill] (0,1) circle [radius=0.1];
\node[below right] at (0,1) {$A_1$};
\draw[fill] (5,5) circle [radius=0.1];
\node[below right] at (5,5) {$B_1$};
\draw[fill] (0,0) circle [radius=0.1];
\node[below right] at (0,0) {$A_1'$};
\draw[fill] (6,5) circle [radius=0.1];
\node[below right] at (6,5) {$B_1'$};
\end{tikzpicture}
\caption{Shifted and extended paths}\label{fig:shifted}
\end{figure}

Let us also extend paths as shown on the picture: the $k$-th path $P_k$ is extended by $h-k$ vertical segments down and by $h-k$ horizontal segments to the right. Denote the extended path by $P_k'$ and its endpoints by $A_k'$ and $B_k'$. The extensions are shown on Figure~\ref{fig:shifted} by dashed lines. 

It is clear that the weights of $P_k$ and $P_k'$ differ by a ``staircase monomial'': $\wt P_k=\wt P_k'\cdot x_k^{h-k}$. This means that
\[
 x^T=\left[(x_1^{\lambda_1+h-1}\dots x_{h-1}^{\lambda_1+1} x_h^{\lambda_1})\underline{(x_{h+1},\dots,x_{h+m})}^\lambda\right]\cdot\wt P_1'\dots\wt P_h'.
\]

This allows us to express $h$-flagged Schur polynomials as weighted  sums over sets of \emph{noncrossing} paths. To do this, let us introduce some further notation.

\begin{definition}\label{def:extended} Let $\lambda=(\lambda_1,\dots,\lambda_m)$ be a Young diagram, and $k,\ell$ two nonnegative integers. Let us shift $\lambda$ by the vector $(\ell,-k)$. The minimal Young diagram containing this set is denoted by $\lambda(k,\ell)$. In other terms,  $\lambda[k,\ell]$  is obtained from $\lambda$ by adding $k$ rows of length $\lambda_1$ from above and $\ell$ columns of height $m$ from the left.
\end{definition}

Consider the diagram $\lambda(h-1,h-1)$. Let $A_i'$ and $B_i'$ be as shown in Figure~\ref{fig:shifted}: namely, $A'_i=(i-1,-m-h+1)$ and  $B'_i=( \lambda_1+h-1,1-i)$. Denote by $Z_{nc}((A_1',B_1'),\dots,(A_h',B_h'))$ the weighted sum taken over all $h$-tuples of \emph{non-crossing} paths joining $A'_1$ with $B_1'$, \dots $A_h'$ with $B'_h$ and contained in the diagram $\lambda[h-1,h-1]$. (The index ``nc'' stands for ``non-crossing''). The above discussion shows that the $h$-flagged Schur function $s_\lambda^{h}(x_1,\dots,x_n)$ equals
\begin{multline*}
s_\lambda^{(h)}(x_1,\dots,x_{m+h})=\left[(x_1^{\lambda_1+h-1}\dots x_{h-1}^{\lambda_1+1} x_h^{\lambda_1})
\underline{( x_{h+1},\dots,x_{h+m})}^{\lambda}\right]
\times\\ 
\times Z_{nc}((A'_1,B'_1),\dots,(A'_h,B'_h)).
\end{multline*}

The following statement is a particular case of the well-known Lindstr\"om--Gessel--Viennot lemma. To make our exposition complete, we formulate this lemma in full generality and give its proof in Subsection~\ref{ssec:LGV}.

\begin{lemma} The following identity holds:
\[
Z_{nc}((A'_1,B'_1),\dots,(A'_h,B'_h))= \det\left( Z(A'_i,B'_j) \right)_{i,j=1}^h.
\]
\end{lemma}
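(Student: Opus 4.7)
The plan is to prove this by the standard sign-reversing involution argument for the Lindström--Gessel--Viennot lemma. First I expand the determinant as
\[
\det\bigl(Z(A'_i,B'_j)\bigr)_{i,j=1}^h = \sum_{\sigma\in\fS_h}\mathrm{sgn}(\sigma)\prod_{i=1}^h Z(A'_i,B'_{\sigma(i)}),
\]
and then unfold each partition function as a weighted sum over monotone lattice paths inside $\lambda[h-1,h-1]$. This rewrites the right-hand side as a signed sum over pairs $(\sigma,(P_1,\dots,P_h))$, where $P_i$ is a path from $A'_i$ to $B'_{\sigma(i)}$, the sign is $\mathrm{sgn}(\sigma)$, and the weight is $\prod_i\wt P_i$.

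Next I split this signed sum into the contribution of path families whose paths are pairwise vertex-disjoint and the contribution of crossing families. For the specific endpoints at hand, the sources $A'_i=(i-1,-m-h+1)$ lie on the bottom boundary of $\lambda[h-1,h-1]$ with strictly increasing $x$-coordinates, while the sinks $B'_i=(\lambda_1+h-1,1-i)$ lie on the right boundary with strictly decreasing $y$-coordinates. A standard planarity argument then shows that any family of pairwise vertex-disjoint NE-monotone paths between these two sets must connect $A'_i$ to $B'_i$, so the only permutation $\sigma$ yielding a non-crossing configuration is $\sigma=\mathrm{Id}$. Consequently the vertex-disjoint contributions sum to exactly $Z_{nc}((A'_1,B'_1),\dots,(A'_h,B'_h))$.

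It remains to show that the crossing configurations cancel. For this I construct the classical tail-swap involution: given a crossing configuration, let $i$ be the smallest index for which $P_i$ shares a vertex with some other path, let $j>i$ be the smallest index with $P_i\cap P_j\neq\varnothing$, and let $v$ be the first vertex of $P_i\cap P_j$ along $P_i$. Swap the portions of $P_i$ and $P_j$ that occur after $v$, leaving the other paths unchanged. The resulting family is again crossing, the triple $(i,j,v)$ is canonically recovered from it, so the operation is an involution. It replaces $\sigma$ by $\sigma\circ(i,j)$, reversing the sign, while preserving the multiset of edges used and therefore the weight. The crossing contributions thus cancel in pairs, yielding the claimed determinantal identity.

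The main technical point is the well-definedness of the involution, in particular the verification that $v$ is intrinsic to the (unordered) data of the two paths and does not shift under the swap, so that iterating the construction returns to the original configuration. The non-crossing geometric argument for $\sigma=\mathrm{Id}$ is then an essentially immediate consequence of the configuration of the $A'_i$ and $B'_i$ along two perpendicular sides of the extended diagram $\lambda[h-1,h-1]$.
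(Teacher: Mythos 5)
Your overall strategy is exactly the one the paper uses: expand the determinant over $S_h$, observe that the geometry of the endpoints (sources along a horizontal line, sinks along a vertical line) forces every vertex-disjoint family to realize $\sigma=\Id$, and cancel the crossing families by a weight-preserving, sign-reversing tail swap. The first two steps are fine. The gap is in the precise recipe for the involution. You fix the data in the order: least $i$, then least $j>i$ with $P_i\cap P_j\neq\varnothing$, then $v$ the first vertex of $P_i\cap P_j$ along $P_i$. This is not an involution: the tail of $\widetilde P_i$ is the old tail of $P_j$ beyond $v$, and it may meet some $P_k$ with $i<k<j$ that the original $P_i$ did not meet; on the second application the least partner of index $i$ is then $k$, not $j$, and the swap happens at a different vertex, so you do not return to the starting configuration. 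This failure is realizable in the present lattice setting (for instance with $h=3$ and $\sigma=(2,3,1)$ one can arrange $P_1\cap P_2=\varnothing$ while the portion of $P_3$ beyond its first common vertex with $P_1$ still crosses $P_2$). You correctly single out well-definedness as ``the main technical point,'' but the recipe as stated does not deliver it.

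The standard repair --- and the charitable reading of the paper's own, equally terse, description --- is to reverse the order of the last two choices: let $v$ be the first vertex of $P_i$ lying on the union of \emph{all} the other paths, and only then let $j$ be the least index of a path through $v$. Then $\widetilde P_i$ agrees with $P_i$ up to $v$; the union of the other paths is unchanged strictly before $v$ (the swap only exchanges tails beyond $v$, and a path in an acyclic graph cannot meet its own tail); and the set of indices of paths passing through $v$ is unchanged. Hence the triple $(i,v,j)$ is genuinely recovered and the map is an involution. With this single correction your argument is complete and coincides with the paper's proof.
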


This lemma together with the expressions for Schur polynomials via lattice paths gives us the following result which expresses an $h$-flagged Schur polynomial via 1-flagged Schur polynomials.

\begin{theorem}\label{thm:flagged} The following equality holds:
\begin{multline*}
s_\lambda^{(h)}(x_1,\dots,x_{h+m})=\left[(x_1^{\lambda_1+h-1}\dots x_{h-1}^{\lambda_1+1} x_h^{\lambda_1})
\underline{(x_{h+1},\dots, x_{h+m})}^{\lambda}\right]\times\\
\times\det\left(\frac{s^{(1)}_{\lambda[i-1,j-1]}(x_j,x_{j+1},\dots,x_{h+m})}{x_j^{\lambda_1+h-i} \underline{(x_{j+1},\dots,x_{h+m})}^{\lambda[i-1,j-1]}}\right)_{i,j=1}^h
\end{multline*}
\end{theorem}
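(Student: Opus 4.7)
The plan is to combine the noncrossing-paths formula for $s_\lambda^{(h)}$ derived in the paragraph above the theorem with the Lindstr\"om--Gessel--Viennot lemma stated just before it. That discussion gives
\[
s_\lambda^{(h)}(x_1,\dots,x_{h+m}) =
\left[x_1^{\lambda_1+h-1}\cdots x_h^{\lambda_1}\,\underline{(x_{h+1},\dots,x_{h+m})}^{\lambda}\right]\cdot
Z_{nc}((A'_1,B'_1),\dots,(A'_h,B'_h))
\]
as the starting point, and the LGV lemma immediately converts $Z_{nc}$ into $\det\bigl(Z(A'_i,B'_j)\bigr)_{i,j=1}^h$. So the entire task reduces to rewriting each individual partition function $Z(A'_i,B'_j)$ as the ratio that is the $(i,j)$-entry of the matrix in the theorem.

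I will prove this ratio identity by applying Proposition~\ref{prop:oneflag} to a suitable sub-region of $\lambda[h-1,h-1]$. Any east--north path from $A'_i$ to $B'_j$ is confined to the rectangle spanned by these two points, so intersecting with $\lambda[h-1,h-1]$ and translating $A'_i$ to the origin yields a sub-Young-diagram which is itself of the form $\lambda[k,\ell]$; its row lengths are read off directly from the definition of $\lambda[h-1,h-1]$. The horizontal-edge weights of this sub-region, in the original $\lambda[h-1,h-1]$ weighting, use only the variables $x_j,x_{j+1},\dots,x_{h+m}$ (with the top row carrying weight $x_j^{-1}$), and so match those of Proposition~\ref{prop:oneflag} under the relabeling $y_t:=x_{j+t-1}$. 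Solving the formula of that proposition for the partition function gives $Z(A'_i,B'_j)$ exactly as the theorem's $(i,j)$-entry, once one identifies the length of the top row of the sub-diagram with the exponent $\lambda_1+h-i$ appearing in the denominator. Plugging this back into the LGV determinant and factoring out the monomial prefactor produces the claimed formula.

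The main obstacle is the combinatorial bookkeeping: reading off the exact shape $\lambda[k,\ell]$ of the sub-diagram for each $(i,j)$ and verifying both that its top-row length is $\lambda_1+h-i$ (so the $x_j$-exponent agrees with the statement of the theorem) and that the variable relabeling $y_t=x_{j+t-1}$ turns each horizontal-edge weight of the sub-region into the right inverse variable. Once these indices are pinned down, nothing remains to compute: the prefactor in front of $Z_{nc}$ is already in the required form, LGV is applied as a black box, and Proposition~\ref{prop:oneflag} delivers each entry of the matrix by inspection.
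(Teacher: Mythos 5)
Your proposal is correct and is essentially the paper's own (largely implicit) proof: the paper likewise derives the noncrossing-path expression for $s_\lambda^{(h)}$, applies the Lindstr\"om--Gessel--Viennot lemma to get $\det\bigl(Z(A'_i,B'_j)\bigr)$, and identifies each entry via Proposition~\ref{prop:oneflag} applied to the sub-diagram of $\lambda[h-1,h-1]$ swept by paths from $A'_i$ to $B'_j$. One note on the bookkeeping you defer: carrying it out, the sub-diagram attached to the pair $(A'_i,B'_j)$ is $\lambda[h-j,h-i]$ --- whose top row indeed has length $\lambda_1+h-i$ and whose top edge carries weight $x_j^{-1}$, exactly as you predict --- rather than $\lambda[i-1,j-1]$ as printed in the theorem's entry; your matrix agrees (up to a determinant-preserving simultaneous reindexing of rows and columns) with the paper's worked example for $\lambda=(2,1)$, $h=2$, so your argument goes through and the discrepancy lies in the theorem's labelling, not in your proof.
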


\begin{remark}
This determinantal formula is different from the Jacobi--Trudi formula (and, to the best of our knowledge, cannot be reduced to it). Note, in particular, that all matrix entries in the Jacobi--Trudi formula are polynomials in $x_1,\dots,x_n$, while the matrix elements in our formula  are polynomials in \emph{different} sets of variables. 
\end{remark}

\begin{example} Let $h=2$, $\lambda=(2,1)$. Then our formula looks as follows:
\[
\frac{s^{(2)}_{(2,1)}(x_1,x_2,x_3,x_4)}{x_1^3x_2^2x_3^2x_4}=\det\begin{pmatrix}
\frac{s_{(2,1)}^{(1)}(x_2,x_3,x_4)}{x_2^2x_3^2x_4} & 
\frac{s_{(2,2,1)}^{(1)}(x_1,x_2,x_3,x_4)}{x_1^2x_2^2x_3^2x_4}\\
\frac{s_{(3,2)}^{(1)}(x_2,x_3,x_4)}{x_2^3x_3^3x_4^2} & 
\frac{s_{(3,3,2)}^{(1)}(x_1,x_2,x_3,x_4)}{x_1^3x_2^3x_3^3x_4^2}
\end{pmatrix}.
\]

\end{example}

We will also need a slightly different form of this determinantal formula. It can be obtained from Theorem~\ref{thm:flagged} by algebraic manipulations with determinants, but we will give a combinatorial proof instead.

Let us modify Definition~\ref{def:extended} in the following way. 

\begin{definition}\label{extended2} Let $\lambda=(\lambda_1,\dots,\lambda_m)$ be a Young diagram. A \emph{$(k,\ell)$-extended} diagram (notation: $\widehat\lambda[k,\ell]$) is defined as follows:
\[
\widehat{\lambda}[k,\ell]=(\lambda_1+k+\ell,\lambda_1+k-1+\ell,\dots,\lambda_1+\ell,\lambda_2+\ell,\dots,\lambda_m+\ell,\ell,\ell-1,\dots,1).
\]
\end{definition}
In other words, $\widehat\lambda[k,\ell]$ is obtained from $\lambda$ by adding ``staircases'' of size $k$ and $\ell$ on the top and to the left of it, respectively.

\begin{example} This picture shows $\lambda=(3,3,1)$ and $\widehat\lambda[2,3]$.
\[
\yng(3,3,1) \qquad \young(~~~~~~~~,~~~~~~~,~~~***,~~~***,~~~*,~~~,~~,~)
\]
\end{example}

Now look at Figure~\ref{fig:shifted} again. We can extend the lengths of paths not by 1, 2, \dots, $h-1$, but by 2, 4, \dots, $2(h-1)$. The result is shown on Figure~\ref{fig:another_shifted}.

\begin{figure}[h!]
\begin{tikzpicture}[nodes={font=\scriptsize}]
\fill[black!15!white]
(0,0)--(0,6)--(7,6)--(7,5)--(1,5)--(1,0)--cycle;
\draw (0,0) grid (7,6);
\foreach \y in {7,...,1}
	\foreach \x in {0,...,6}
		\node at (\x+.5,7-\y+.2) {$x^{-1}_{\y}$};
\fill [white] (1,-0.5)--(7.5,0-.5)--(7.5,5)--(6,5)--(6,4)--(5,4)--(5,2)--(3,2)--(3,1)--(1,1)--cycle;
\draw (7,5)--(6,5)--(6,4)--(5,4)--(5,2)--(3,2)--(3,1)--(1,1)--(1,0);
\begin{scope}[ultra thick, rounded corners]
\draw[blue] 
(0,2)--(0,3)--(1,3)--(1,5)--(2,5)--(2,6)--(5,6);
\draw[red]
(1,1)--(2,1)--(2,2)--(4,2)--(4,3)--(5,3)--(5,5)--(6,5);
\end{scope}
\begin{scope}[ultra thick, dashed]
\draw[blue] 
(0,2)--(0,0);
\draw[blue](5,6)--(7,6);
\end{scope}

\draw[fill] (1,1) circle [radius=0.1];
\node[below right] at (1,1) {$A_2$};
\draw[fill] (6,5) circle [radius=0.1];
\node[below right] at (6,5) {$B_2$};
\draw[fill] (0,2) circle [radius=0.1];
\node[below right] at (0,2) {$A_1$};
\draw[fill] (5,6) circle [radius=0.1];
\node[below right] at (5,6) {$B_1$};
\draw[fill] (0,0) circle [radius=0.1];
\node[below right] at (0,0) {$A_1''$};
\draw[fill] (7,6) circle [radius=0.1];
\node[below right] at (7,6) {$B_1''$};
\end{tikzpicture}
\caption{Another way of extending paths}\label{fig:another_shifted}
\end{figure}
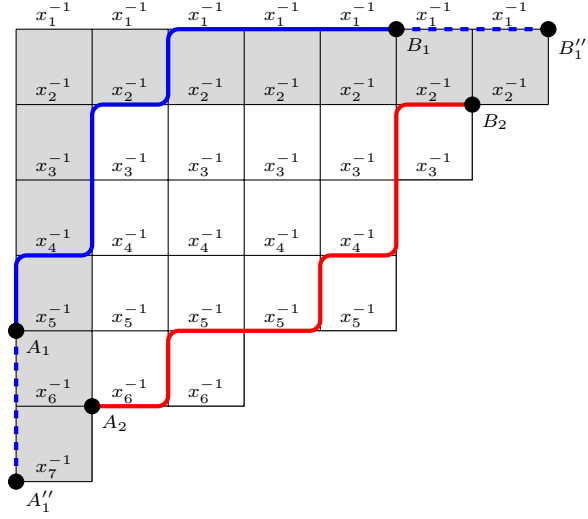

Note that the $k$-th path is situated inside the diagram $\widehat\lambda[h-k,h-k]$.

This allows us to formulate the following corollary of Theorem~\ref{thm:flagged}. Its proof also follows from the Lindstr\"om--Gessel--Viennot lemma; the only difference concerns the weights which are added while extending paths.

\begin{corollary}\label{thm:flagged2} The following equality holds:
\begin{multline*}
s_\lambda^{(h)}(x_1,\dots,x_{h+m})=\left[(x_1^{\lambda_1+2(h-1)}x_2^{\lambda_1+2(h-2)}\dots x_{h-1}^{\lambda_1+2} x_h^{\lambda_1})
\underline{(x_{h+1},\dots, x_{h+m})}^{\lambda}\right]\times\\
\times\det\left(\frac{s^{(1)}_{\widehat\lambda[i-1,j-1]}(x_j,x_{j+1},\dots,x_{2h+m-i})}{x_j^{\lambda_1+2h+2-i-j} \underline{(x_{j+1},\dots,x_{h+m})}^{\widehat\lambda[i-1,j-1]}}\right)_{i,j=1}^h
\end{multline*}
\end{corollary}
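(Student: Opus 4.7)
The plan is to retrace the argument for Theorem~\ref{thm:flagged} almost verbatim, changing only how we extend the shifted paths before invoking the Lindstr\"om--Gessel--Viennot lemma. As in the proof of Theorem~\ref{thm:flagged}, we start with the bijection between $h$-flagged tableaux $T$ of shape $\lambda$ and chains $\mu_1 \subseteq \mu_2 \subseteq \cdots \subseteq \mu_h \subseteq \lambda$, and, after shifting the $k$-th bounding path by $(k-1,-k+1)$, represent $T$ by a non-crossing tuple of lattice paths $(P_1,\dots,P_h)$ with endpoints $A_k,B_k$, so that
\[
x^T = \left[(x_1\cdots x_h)^{\lambda_1}\cdot\underline{(x_{h+1},\dots,x_{h+m})}^\lambda\right]\cdot\wt P_1\cdots \wt P_h.
\]

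The modification is the second step: rather than extending the $k$-th path by $h-k$ segments on each side as in Figure~\ref{fig:shifted}, we extend it by $2(h-k)$ vertical segments below $A_k$ and $2(h-k)$ horizontal segments to the right of $B_k$, as shown in Figure~\ref{fig:another_shifted}. Denote the extended paths by $P_k''$ and their new endpoints by $A_k''$, $B_k''$. The doubled extension is exactly what is needed so that the minimal region containing any path from $A_i''$ to $B_j''$ becomes, after translation, the staircase-extended diagram $\widehat\lambda[i-1,j-1]$ of Definition~\ref{extended2} — the ``extra'' $h-k$ segments on each side cut out precisely the staircase triangles that distinguish $\widehat\lambda[\,\cdot\,,\,\cdot\,]$ from $\lambda[\,\cdot\,,\,\cdot\,]$.

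For the weights, vertical edges carry no weight so the downward extension contributes nothing, while the $2(h-k)$ horizontal segments to the right of $B_k$ all lie at height $-k+1$ and so carry weight $x_k^{-1}$ each. Therefore $\wt P_k = x_k^{2(h-k)}\wt P_k''$, and absorbing the resulting factor into the global prefactor promotes $(x_1\cdots x_h)^{\lambda_1}$ to $\prod_{k=1}^h x_k^{\lambda_1+2(h-k)}$, reproducing the prefactor displayed in the corollary. Applying the Lindstr\"om--Gessel--Viennot lemma to the non-crossing family $\{(A_i'',B_j'')\}$ yields
\[
Z_{nc}\bigl((A_1'',B_1''),\dots,(A_h'',B_h'')\bigr) = \det\bigl(Z(A_i'',B_j'')\bigr)_{i,j=1}^h,
\]
and each individual $Z(A_i'',B_j'')$ is a sum over monotone paths confined to a translate of $\widehat\lambda[i-1,j-1]$. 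By Proposition~\ref{prop:oneflag} applied to that staircase shape in the variables $x_j,x_{j+1},\dots,x_{2h+m-i}$, this partition function equals $s^{(1)}_{\widehat\lambda[i-1,j-1]}(x_j,\dots,x_{2h+m-i})$ divided by the product of the weights of all edges of $\widehat\lambda[i-1,j-1]$, which is exactly the denominator $x_j^{\lambda_1+2h+2-i-j}\underline{(x_{j+1},\dots,x_{h+m})}^{\widehat\lambda[i-1,j-1]}$ appearing in the statement.

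The main obstacle is not any conceptual difficulty — the combinatorial scheme is the same as for Theorem~\ref{thm:flagged} — but the monomial bookkeeping. One must carefully verify that (a) the variable ranges $(x_j,\dots,x_{2h+m-i})$ attached to the $(i,j)$-entry are the correct ones given the translated position of $A_i''$ and $B_j''$, (b) the exponents $\lambda_1+2h+2-i-j$ in the denominator coincide with the contribution of the ``leftmost'' column of $\widehat\lambda[i-1,j-1]$ together with the extra staircase steps introduced by the doubled extension, and (c) the combined prefactor and denominators reassemble correctly into the asserted formula. This is a straightforward but somewhat intricate exponent check, which is the only genuinely new piece of work compared to the proof of Theorem~\ref{thm:flagged}.
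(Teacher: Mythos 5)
Your proposal reconstructs exactly the argument the paper intends: the same shifted non-crossing paths, now extended by $2(h-k)$ steps on each end instead of $h-k$, the weight change $\wt P_k = x_k^{2(h-k)}\wt P_k''$ absorbed into the global prefactor, and the Lindstr\"om--Gessel--Viennot lemma applied to the new endpoints --- which is precisely what the paper's own one-sentence proof (``the only difference concerns the weights which are added while extending paths'') asserts. The bookkeeping you defer, namely matching each $Z(A_i'',B_j'')$ to the stated $(i,j)$-entry (up to the determinant-preserving simultaneous reversal of rows and columns that converts the naturally arising shape $\widehat\lambda[h-j,h-i]$ into $\widehat\lambda[i-1,j-1]$), is likewise left implicit in the paper, so your write-up is at least as complete as the original.
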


\begin{remark} This corollary can also be obtained from Theorem~\ref{thm:flagged} by purely algebraic manipulations with the determinant.
\end{remark}

\begin{remark} Note that the right-hand side depends upon $x_k$ for $k>h+m$, while the left-hand side does not. This means that all these $x_k$'s cancel out while evaluating the determinant.
\end{remark}

We will need this form of our determinantal identity while dealing with Schubert polynomials in the next section.

\subsection{The Lindstr\"om--Gessel--Viennot lemma}\label{ssec:LGV}
In this subsection we prove the Lindstr\"om--Gessel--Viennot lemma on noncrossing paths. This material is by no means new: having appeared first in the preprint \cite{GesselViennot86}, now it can be found in many books and articles, for instance, in \cite{Stanley1}.

Let $\Gamma$ be an arbitrary oriented graph without oriented cycles, with weights assigned to its edges (we suppose that the weights are elements of a certain ring $R$). For a path $P$ joining two vertices $A$ and $B$, by the weight $\wt P$ of $P$ we mean the product of weights of all of its edges. As before, we define the \emph{partition function} $Z(A,B)$ to be the sum of weights of all paths joining $A$ with $B$:
\[
Z(A,B)=\sum_{P\colon A\to B}\wt P.
\]
An $h$-tuple of paths $P_1\colon A_1\to B_1,\dots P_h\colon A_h\to B_h$ is said to be \emph{noncrossing} if any two paths from it have no common points. An (ordered) set of $h$ starting points and $h$ endpoints $(A_1,\dots,A_h)$ and $(B_1,\dots,B_h)$ is said to be \emph{compatible} if for any permutation $\sigma\in S_h$ the existence of a noncrossing $h$-tuple of paths joining $A_1$ with $B_{\sigma(1)}$,\dots, $A_h$ with $B_{\sigma(h)}$ implies that $\sigma$ is the identity permutation.

For two $h$-tuples of starting points and endpoints, consider a weighted sum over all $h$-tuples of noncrossing paths $(P_1,\dots,P_h)$:
\[
Z_{nc}((A_1,B_1),\dots,(A_h,B_h))=\sum_{P_i\colon A_i\to B_i,\quad (P_1,\dots,P_h)\text{ noncrossing}} \wt P_1\cdot\dots\cdot\wt P_h.
\]

\begin{lemma}[Lindstr\"om--Gessel--Viennot] If $(A_1,\dots,A_h)$ and $(B_1,\dots,B_h)$ are compatible, the function $Z_{nc}((A_1,B_1),\dots,(A_h,B_h))$ admits the following determinantal expression:
\[
Z_{nc}((A_1,B_1),\dots,(A_h,B_h))=\det\left(Z(A_i,B_j)\right)_{i,j=1}^h.
\]
\end{lemma}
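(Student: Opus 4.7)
The plan is to prove the lemma by the standard sign-reversing involution argument. First I would expand the determinant as a signed sum over the symmetric group:
\[
\det\bigl(Z(A_i,B_j)\bigr)_{i,j=1}^h = \sum_{\sigma\in S_h} \mathrm{sgn}(\sigma)\prod_{i=1}^h Z(A_i,B_{\sigma(i)}) = \sum_{\sigma\in S_h} \mathrm{sgn}(\sigma) \sum_{(P_1,\dots,P_h)} \prod_{i=1}^h \wt P_i,
\]
where the inner sum is over all $h$-tuples $(P_1,\dots,P_h)$ with $P_i$ any path from $A_i$ to $B_{\sigma(i)}$. Thus the right-hand side is a signed weighted sum over all pairs $(\sigma, (P_1,\dots,P_h))$ of a permutation together with a matching tuple of paths.

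Next I would split this sum into two parts according to whether the tuple $(P_1,\dots,P_h)$ is noncrossing or not. By the compatibility assumption, any noncrossing contribution forces $\sigma = \mathrm{Id}$, so the noncrossing part of the sum is exactly $Z_{nc}((A_1,B_1),\dots,(A_h,B_h))$. It then remains to show that the crossing part cancels.

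The main step, and the one requiring some care, is to construct a sign-reversing, weight-preserving, fixed-point-free involution $\Phi$ on the set of crossing pairs. The standard recipe: given a crossing $(\sigma, (P_1,\dots,P_h))$, let $i$ be the smallest index such that $P_i$ meets some other path, let $j>i$ be the smallest index such that $P_i$ and $P_j$ share a vertex, and let $v$ be the first such shared vertex along $P_i$. Define $\Phi$ by swapping the tails of $P_i$ and $P_j$ after $v$, which produces new paths $P_i'\colon A_i\to B_{\sigma(j)}$ and $P_j'\colon A_j\to B_{\sigma(i)}$, together with the permutation $\sigma' = \sigma\circ(i\,j)$. The key points to verify are: (a) $\Phi$ is well-defined and involutive, which follows because the rule for choosing $(i,j,v)$ applied to the new tuple returns the same triple; (b) the product of weights is unchanged, because the multiset of edges used is the same; (c) $\mathrm{sgn}(\sigma') = -\mathrm{sgn}(\sigma)$, since $\sigma$ and $\sigma'$ differ by a transposition.

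The verification that the selection rule for $(i,j,v)$ is preserved under $\Phi$ is the only technically delicate point, since one must check that no earlier pair of paths becomes crossing after the swap: this follows because paths $P_k$ with $k<i$ are untouched and disjoint from the rest by the choice of $i$, and after the swap $P_i'$ and $P_j'$ still share $v$ as their first common vertex. Combining (a), (b), (c), the contributions of crossing configurations cancel in pairs, leaving precisely $Z_{nc}((A_1,B_1),\dots,(A_h,B_h))$, which establishes the identity.
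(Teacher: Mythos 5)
Your proposal is correct and follows essentially the same route as the paper's proof: expand the determinant over $S_h$, observe that compatibility forces noncrossing tuples to contribute only via $\sigma=\Id$, and cancel the crossing tuples by the tail-swapping sign-reversing involution at the first common point of the first offending path. You are in fact slightly more careful than the paper about verifying that the selection rule for the swap is stable under the involution, which the paper asserts without detailed justification.
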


\begin{proof}
The determinant in the right-hand side can be expressed as the sum over permutations:
\[
\det\left(Z(A_i,B_j)\right)_{i,j=1}^h=\sum_{\sigma\in S_h}(-1)^\sigma Z(A_1,B_{\sigma(1)})\cdot\dots\cdot Z(A_h,B_{\sigma(h)}).\eqno{(*)}
\]
This is an alternating sum over all $h$-tuples of paths joining $A_1,\dots,A_h$ with $B_1,\dots,B_h$ in arbitrary order, taken with the appropriate sign. Such an $h$-tuple can be either noncrossing or have crossings; in the former case, it corresponds to the identity permutation. So we only need to show that the alternating sum over all $h$-tuples of paths with crossings is zero.

To do this, we construct a weight-preserving bijection on the set of $h$-tuples of  paths with crossings. It is constructed as follows. We start with an arbitrary $h$-tuple of paths $(P_1,\dots,P_h)$ with crossings, corresponding to a permutation $\sigma$. Consider the path $P_i$ with the smallest number of its starting point, such that it meets another path, say, $P_j$. Let $C$ be the first common point of these paths. Now let us ``switch the tails'' of these two paths: let $\widetilde P_i$ be the path which coincides with $P_i$ before $C$ and with $P_j$ after $C$. Likewise, let  $\widetilde P_j$ coincide with $P_j$ before $C$ and with $P_i$ after $C$ (see Figure~\ref{fig:swaptails}). We have obtained a new $h$-tuple of paths $(P_1,\dots,\widetilde P_i,\dots,\widetilde P_j,\dots,P_h)$. 

\begin{figure}[h!]

\begin{tikzpicture}[scale=.7,nodes={font=\scriptsize}]
\draw[help lines] (0,-5) grid (6,0);
\begin{scope}[ultra thick, rounded corners]
\draw[blue] (0,-5)--(0,-3)--(1,-3)--(1,-2.025)--(3,-2.025)--(3,-1)--(4,-1)--(4,0)--(6,0);
\draw[red] (2,-5)--(2,-1.95)--(4,-1.95)--(4,-1)--(6,-1);
\draw[green] (1,-5)--(1,-4)--(3,-4)--(3,-3)--(4,-3)--(4,-2.025)--(6,-2.025);
\end{scope}
\foreach \x in {1,...,3} {
\draw[fill] (\x-1,-5) circle [radius=0.1];
\node[below right] at (\x-1.2,-5) {$A_{\x}$};
\draw[fill] (6,-\x+1) circle [radius=0.1];
\node[below right] at (6,-\x+1) {$B_{\x}$};
\draw[fill] (2,-2) circle [radius=0.1];
\node[above left] at (2,-2) {$C$};
};
\end{tikzpicture}
\qquad
\begin{tikzpicture}[scale=.7,nodes={font=\scriptsize}]
\draw[help lines] (0,-5) grid (6,0);
\begin{scope}[ultra thick, rounded corners]
\draw[blue] (0,-5)--(0,-3)--(1,-3)--(1,-2.025)--(4,-2.025)--(4,-1)--(6,-1);
\draw[red] (2,-5)--(2,-1.95)--(3,-1.95)--(3,-1)--(4,-1)--(4,0)--(6,0);
\draw[green] (1,-5)--(1,-4)--(3,-4)--(3,-3)--(4,-3)--(4,-2.025)--(6,-2.025);
\end{scope}
\foreach \x in {1,...,3} {
\draw[fill] (\x-1,-5) circle [radius=0.1];
\node[below right] at (\x-1.2,-5) {$A_{\x}$};
\draw[fill] (6,-\x+1) circle [radius=0.1];
\node[below right] at (6,-\x+1) {$B_{\x}$};
};
\draw[fill] (2,-2) circle [radius=0.1];
\node[above left] at (2,-2) {$C$};
\end{tikzpicture}
\caption{Involution on the set of $h$-tuples of crossing paths}\label{fig:swaptails}
\end{figure}
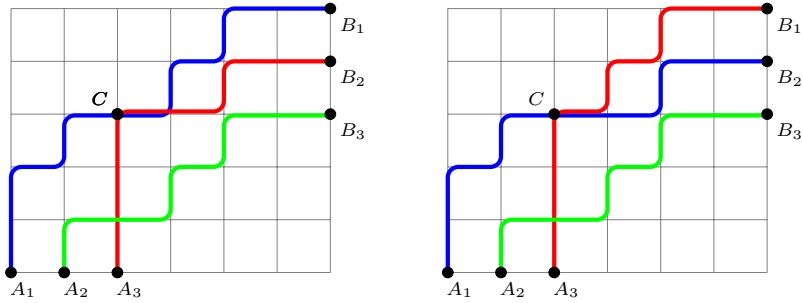

This map on the $h$-tuples of crossing paths is involutive. Obviously,
$\wt P_i\wt P_j=\wt \widetilde P_i\wt\widetilde P_j$, so it is weight-preserving. Finally, the permutations corresponding to $(P_1,\dots,P_h)$ and $(P_1,\dots,\widetilde P_i,\dots,\widetilde P_j,\dots,P_h)$ are obtained from each other by multiplication by a transposition, so they are counted in the expression $(*)$ with the opposite signs. This means that the total contribution of this pair of $h$-tuples of paths into $(*)$ is zero, and the determinant is equal to the sum of weights over all $h$-tuples of noncrossing paths. The lemma is proven.
\end{proof}

\section{Schubert polynomials}\label{sec:schubert}

\subsection{Definition of Schubert polynomials}\label{ssec:defschubert} Schubert polynomials were introduced for studying the cohomology ring of a full flag variety. Their definition is due to A.~Lascoux and M.-P.~Sch\"utzeberger \cite{LascouxSchutzenberger82}; implicitly it appeared in the paper \cite{BernsteinGelfandGelfand73} by J.~Bernstein, I.~Gelfand and S.~Gelfand. Let us recall it.

Let $S_n$ denote the symmetric group, i.e., the group of permutations of an $n$-element set $\{1,\dots, n\}$. We will use the one-line notation for permutations; i.e., $(1342)$ is a permutation taking 1 to 1, 2 to 3, 3 to 4 and 4 to 2. Let $s_i=(1,2,\dots,i+1,i,i+2,\dots,n)$ denote the $i$-th simple transposition, and $w_0=(n,n-1,\dots,2,1)$ the element of maximal length. We mean by $\ell(w)$ the \emph{length} of the permutation $w$, i.e., the minimal number $k$ of simple transpositions $s_{i_1},\dots,s_{i_k}$ such that $w=s_{i_1}\dots s_{i_k}$.

For each $i\leq n-1$, define a \emph{divided difference operator} $\partial_i\colon \ZZ[x_1,\dots,x_n]\to \ZZ[x_1,\dots,x_n]$ as follows:
\[
\partial_i f(x_1,\dots,x_n)=\frac{f(x_1,\dots,x_n)-f(x_1,\dots,x_{i+1},x_i,\dots,x_n)}{x_i-x_{i+1}}.
\]

\begin{definition} Let $w\in S_n$ be a permutation. Consider a reduced decomposition of $w_0 w$ into a product of simple transpositions:
\[
w_0w=s_{i_1}s_{i_2}\dots s_{i_k}.
\]
Then the Schubert polynomial corresponding to $w$ is defined as follows:
\[
\fS_w=\partial_{i_k}\dots\partial_{i_2}\partial_{i_1}(x_1^{n-1}x_2^{n-2}\dots x_{n-1}).
\]
(Note that the order of indices is reversed). 
\end{definition}

Since the divided difference operators satisfy the braid relations: 
\begin{eqnarray*}
\partial_i\partial_j=\partial_j\partial_i &\text{for } |i-j|>1,\\ \partial_i\partial_{i+1}\partial_i=\partial_{i+1}\partial_{i}\partial_{i+1},
\end{eqnarray*}
the polynomial $\fS_w$ depends only on $w$ and not on its decomposition into simple transpositions.

There are standard embeddings $S_n\hookrightarrow S_m$ for $n<m$; under this embedding, $S_n$ permutes only the first $n$ elements of $\{1,\dots, m\}$. Sometimes we will treat permutations as elements of the direct limit $S_\infty=\lim\limits_{\to} S_n$. It can be shown (cf.~\cite{Macdonald91}) that the Schubert polynomials are stable under these embeddings, so they can be regarded as elements of the polynomial ring $\ZZ[x_1,x_2,\dots]$ in countably many variables.

\subsection{Schubert and flagged Schur polynomials}

Recall that a permutation $w\in S_n$ is said to be \emph{vexillary} if it is (2143)-avoiding, i.e., there are no four numbers $i<j<k<\ell$ such that $w(j)<w(i)<w(\ell)<w(k)$. M.~Wachs~\cite{Wachs85} shows that the Schubert polynomials of vexillary permutations can be obtained as flagged Schur polynomials for certain flags. This is done as follows.

We define the \emph{inversion sets} of a permutation $w$ as $I_i(w)=\{j\mid j>i, w(j)<w(i)\}$. It can be shown that a permutation is vexillary iff its inversion sets form a chain (see \cite{Manivel98} for details).

Let $c_i=\# I_i$. The sequence of these cardinalities $(c_1,\dots,c_n)$ is called the \emph{Lehmer code} of $w$. Denote by $\lambda(w)$ the partition obtained by arranging $c_1,\dots, c_n$ in the decreasing order. Likewise, let $b(w)=(b_1(w)\leq\dots\leq b_m(w))$ be the sequence of integers $\min I_i(w)-1$ for all nonempty $I_i(w)$, arranged in the increasing order.

\begin{theorem}[{\cite[Theorem~2.3]{Wachs85}}]\label{thm:wachs} If $w$ is vexillary, then
\[
\fS_w=s_{\lambda(w)}(b(w)).
\]
\end{theorem}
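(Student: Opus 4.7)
The plan is to invoke the Billey–Jockusch–Stanley (Fomin–Kirillov) combinatorial formula,
$$\fS_w = \sum_{D \in \mathrm{RC}(w)} \prod_{(i,j) \in D} x_i,$$
with the sum ranging over reduced pipe dreams (rc-graphs) of $w$, and then to exhibit a weight-preserving bijection $\Phi \colon \mathrm{RC}(w) \to \cT(\lambda(w), b(w))$ when $w$ is vexillary. Together these identify $\fS_w$ termwise with the flagged Schur polynomial $s_{\lambda(w)}(b(w))$.

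The structural input is the characterization of vexillary permutations as those whose inversion sets $\{I_i(w)\}_i$ form a chain under inclusion (recalled just before the theorem). Under this chain condition, the reduced pipe dreams of $w$ are rigid: the number of crosses in row $i$ equals the code entry $c_i$, and the columns occupied by the crosses in a row with more crosses contain those of any row with fewer crosses. Reordering the rows in decreasing order of row-sum therefore gives a Young diagram of shape $\lambda(w)$, and the largest admissible column index for a cross in the (reordered) row $i$ is precisely $b_i(w)$.

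Given this, the bijection $\Phi$ assigns to a pipe dream $D \in \mathrm{RC}(w)$ the tableau whose $i$-th row (after the above reordering) records the column indices of the crosses of $D$, suitably shifted to lie in $\{1, \dots, b_i(w)\}$. Weights match tautologically: the exponent of $x_k$ equals both the number of crosses in the $k$-th row of $D$ and the number of entries equal to $k$ in $\Phi(D)$. The row-weak and column-strict conditions defining a flagged SSYT correspond respectively to the crosses of each row occupying distinct columns and to the reducedness of the pipe dream combined with the chain structure on inversion sets. Summing weights on both sides then yields the theorem.

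The main obstacle is to prove that $\Phi$ is genuinely a bijection — in particular, that every flagged SSYT of shape $\lambda(w)$ and flag $b(w)$ arises from a \emph{unique reduced} pipe dream of $w$. Without vexillarity this fails: the natural pipe-dream-to-filling map may land outside the flagged tableau set, or merge distinct pipe dreams. The technical heart is to track how ``chute'' and ``ladder'' moves on reduced pipe dreams correspond to elementary moves on tableaux, and to show that the chain condition on inversion sets rules out the column violations that would otherwise occur. Once this bijective correspondence is established, the weight-preservation discussed above immediately gives $\fS_w = s_{\lambda(w)}(b(w))$.
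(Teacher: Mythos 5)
The paper does not actually prove this statement: it is quoted verbatim from Wachs, whose argument is by induction along weak order using divided difference operators --- one shows that flagged Schur functions satisfy $\partial_i s_\lambda(b)=s_{\lambda'}(b')$ under suitable hypotheses on the flag, and anchors the induction at dominant permutations, where $\fS_w$ is the monomial given by the unique flagged tableau. Your pipe-dream route is therefore genuinely different, and a bijective proof along these lines does exist in the literature (Lenart; Serrano--Stump; Knutson--Miller--Yong), so the overall strategy is legitimate.

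However, as written your argument contains a false key claim. You assert that for vexillary $w$ the reduced pipe dreams are ``rigid'' in the sense that every $D\in\mathrm{RC}(w)$ has exactly $c_i$ crosses in row $i$. If that were true, $\fS_w$ would be the single monomial $\prod_i x_i^{c_i}$, which happens only for dominant $w$. Already $w=(132)$ is vexillary with code $(0,1,0)$ and $\fS_w=x_1+x_2$: its two reduced pipe dreams have their unique cross in row $1$ and in row $2$ respectively, so the row distribution is not determined by the code. Relatedly, your weight bookkeeping is inconsistent: you define $\Phi(D)$ by recording \emph{column} indices of crosses as tableau entries, but then claim the multiplicity of $k$ among the entries equals the number of crosses in \emph{row} $k$ of $D$; with your definition it would count crosses in column $k$. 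The correct bijection records the row index of each cross in the box of $\lambda(w)$ indexed by the inversion that the cross resolves, and proving that this lands bijectively in $\cT(\lambda(w),b(w))$ --- row-weakness, column-strictness, the flag bound $b_i(w)=\min I_i(w)-1$, and surjectivity --- is the entire content of the theorem. You correctly flag this as ``the main obstacle,'' but since the structural setup preceding it is wrong, the proposal as it stands does not constitute a proof.
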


\subsection{Dominant permutations} 
\begin{definition}
A permutation $w\in S_n$ is called \emph{dominant} if it is 132-avoiding, i.e., there is no triple $i<j<k$ such that $w(i)<w(k)<w(j)$.
\end{definition}

Clearly, if a permutation is 132-avoiding, it is also 2143-avoiding. So, all dominant permutations are vexillary.

It is well-known (cf., for instance, \cite{Manivel98}) that dominant permutations are characterized by the following property: their Schubert polynomial $\fS_w$ is a monomial in $x_1,\dots,x_{n-1}$. Moreover, the Lehmer code $\lambda(w)$ of such a permutation is a partition, and  $\fS_w=\underline{(x_1,\dots,x_{n-1})}^{\lambda(w)}$.

For a dominant permutation $w$ with its code $\lambda=(\lambda_1,\dots,\lambda_n)$, let $(i_1,\dots,i_k)$ be the numbers of the rows of $\lambda$ which are strictly shorter than the previous rows:
\[
\lambda_1=\dots=\lambda_{i_1-1}>\lambda_{i_2}=\dots=\lambda_{i_3-1}>\dots.
\]
Then the flag of $w$ is as follows:
\[
b(w)=(i_1,\dots,i_1,i_2,\dots,i_2,\dots),
\]
with $i_j-i_{j-1}$ entries equal to $i_j$. 

A flagged tableau of shape $\lambda$ with this flag is unique: all its entries in the $i$-th row are equal to $i$. So we can replace this flag by $(1,2,3,\dots)$. Hence, for a dominant permutation $w$ its Schubert polynomial is a ``$0$-flagged Schur polynomial of shape $\lambda$''.

\subsection{Shifts of permutations} Let $w\in S_n$ be a permutation, and let $h$  be a positive integer. The \emph{$h$-shifted permutation} $1^h\times w\in S_{n+h}$ is defined as
\[
1^h \times w\colon i\mapsto\begin{cases} i,& i\leq h;\\ w(i-h)+h,& i>h.\end{cases}
\]
In different terms, we add $h$ elements ``in front of the set $\{1,\dots,n\}$'' acted on by permutations, such that the action of $1^h\times w$ on these new elements is trivial, and the action on the remaining elements coincides with the action of $w$.

The following proposition is straightforward.

\begin{prop} Let $w$ be a vexillary permutation with the Lehmer  code $\lambda(w)$ and flag $b(w)=(b_1,b_2,\dots)$, and $h$ a positive integer. Then:
\begin{itemize}
\item $1^h\times w$ is vexillary;

\item $\lambda(1^h\times w)=\lambda(w);$ 

\item $b(1^h\times w)=(b_1+h,b_2+h,\dots)$.
\end{itemize}
\end{prop}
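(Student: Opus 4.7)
The plan is to compute the inversion sets of $1^h\times w$ directly from their definition and then read off the code and the flag; vexillarity follows from a similar case analysis on pattern positions.

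First I would record the basic structural fact: under $1^h\times w$, the positions $1,\dots,h$ are fixed (each $i\le h$ maps to $i\le h$), while for $i>h$ we have $(1^h\times w)(i)=w(i-h)+h>h$. So every value occupying a position $\le h$ is strictly smaller than every value occupying a position $>h$. Using this, I would show that $I_i(1^h\times w)=\varnothing$ whenever $i\le h$: any later position $j$ satisfies either $j\le h$, giving $(1^h\times w)(j)=j>i$, or $j>h$, giving $(1^h\times w)(j)\ge h+1>i$. For $i>h$ every later position also lies above $h$, so
\[
(1^h\times w)(j)<(1^h\times w)(i)\iff w(j-h)<w(i-h),
\]
which gives the identification $I_i(1^h\times w)=h+I_{i-h}(w)$ as subsets of $\{1,\dots,n+h\}$.

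From this the statements about $\lambda$ and $b$ are immediate. The Lehmer code of $1^h\times w$ is $(\underbrace{0,\dots,0}_{h},c_1(w),c_2(w),\dots,c_n(w))$, so rearranging the nonzero entries in decreasing order gives exactly $\lambda(w)$. For the flag, the nonempty sets $I_i(1^h\times w)$ correspond bijectively to the nonempty $I_j(w)$ via $i=j+h$, and on these $\min I_i(1^h\times w)-1=\min I_j(w)-1+h$; sorting in increasing order yields $(b_1(w)+h,b_2(w)+h,\dots)$ as claimed.

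For vexillarity, I would argue by contradiction: suppose $i<j<k<\ell$ form a $2143$-pattern in $1^h\times w$, i.e., $(1^h\times w)(j)<(1^h\times w)(i)<(1^h\times w)(\ell)<(1^h\times w)(k)$. If $i\le h$ then $(1^h\times w)(i)=i$, and $(1^h\times w)(j)<i\le h$ forces $j\le h$ (since positions $>h$ take values $>h$), whence $(1^h\times w)(j)=j>i$, contradicting $(1^h\times w)(j)<i$. Therefore $i>h$, and all four indices lie above $h$, so the pattern descends to a $2143$-pattern $i-h<j-h<k-h<\ell-h$ in $w$, contradicting the vexillarity of $w$. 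None of these steps present a genuine obstacle; the proposition is really a bookkeeping exercise describing how the inversion data transforms when one prepends $h$ fixed points.
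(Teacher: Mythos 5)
Your proof is correct; the paper itself gives no argument, simply declaring the proposition straightforward, and your direct computation of the inversion sets $I_i(1^h\times w)$ (empty for $i\le h$, equal to $h+I_{i-h}(w)$ for $i>h$) together with the pattern-descent argument for vexillarity is exactly the bookkeeping the authors leave to the reader.
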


This gives us a description of permutations whose Schubert polynomials are equal to $h$-flagged Schur polynomials.

\begin{remark} Of course, for a dominant permutation $w$, the permutations $1^h\times w$ are not dominant anymore.
\end{remark}

\subsection{Extension of dominant permutations} In this subsection we define an operation on dominant permutations which corresponds to $(k,\ell)$-extension of their Young diagrams.

Let $w$ be a dominant permutation. Define a \emph{$(1,0)$-extended} permutation $\widehat w[1,0]$ as follows:
\[
\widehat w[1,0]\colon \begin{cases}
1\mapsto w(1)+1;\\
2\mapsto w(1);\\
i\mapsto w(i-1),& 2<i, w(i-1)<w(1), \\
i\mapsto w(i-1)+1, & 2<i, w(i-1)>w(1).
\end{cases}
\]
This is shown on Fig.~\ref{fig:ext1}: the first entry is ``doubled'', represented by a circle, is replaced by two entries. In this picture $w=(42135)$, $\widehat w[1,0]=(542136)$.
\begin{figure}[h!]
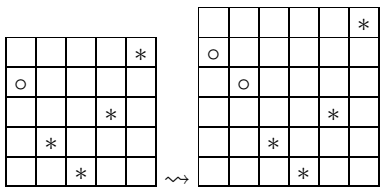

\[
\young(~~~~*,{\circ}~~~~,~~~*~,~*~~~,~~*~~) \leadsto 
\young(~~~~~*,{\circ}~~~~~,~{\circ}~~~~,~~~~*~,~~*~~~,~~~*~~)
\]
\caption{(1,0)-extension of a permutation}\label{fig:ext1}
\end{figure}

Clearly, the code of the latter permutation is obtained from the code of the former permutation exactly by the operation of $(1,0)$-extension of Young diagrams in the sense of Definition~\ref{extended2}. Moreover, $\widehat w[1,0]$ is also dominant.

In a similar manner we can define the $(0,1)$-extension of a permutation. The only difference is that the element replaced by two elements is not 1, but the preimage of $1$:
\[
\widehat w[0,1]\colon\begin{cases}
i\mapsto w(i)+1, & i\leq w^{-1}(1);\\
w^{-1}(1)+1\mapsto 1;\\
i\mapsto w(i-1)+1, & i> w^{-1}(1)+1.
\end{cases}
\]

Here is an example of the $(0,1)$-extension. In this example the original permutation is $(42135)$, and the extended permutation is $(532146)$.
\begin{figure}[h!]
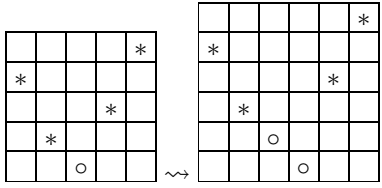

\[
\young(~~~~*,*~~~~,~~~*~,~*~~~,~~{\circ}~~) \leadsto \young(~~~~~{*},{*}~~~~~,~~~~{*}~,~{*}~~~~,~~{\circ}~~~,~~~{\circ}~~)
\]
\caption{$(0,1)$-extension of a permutation}\label{fig:ext2}
\end{figure}

A $(k,\ell)$-extension $\widehat w[k,l]$ of a dominant permutation is obtained as a result of $k$ successive applications of $(1,0)$-extension and $\ell$ applications of $(0,1)$-extension (these operations commute). Again, the Young diagram corresponding to its code is obtained from the Young diagram of $w$ by a $(k,\ell)$-extension.

\subsection{Determinantal formula for Schubert polynomials of shifted dominant permutations}

With the definitions from the two previous subsections, we can reformulate Corollary~\ref{thm:flagged2} for Schubert polynomials. The formula from that theorem will give the determinantal expression for the Schubert polynomial of an $h$-shifted dominant permutation $w$ in terms of $1$-shifted dominant permutations obtained from $w$ by extensions.

\begin{theorem}\label{thm:mainschubert}
Let $w$ be a dominant permutation, let $\lambda=(\lambda_1,\lambda_2,\dots)$ be its Lehmer code. 
 Then the Schubert polynomial of the $h$-shifted permutation $1^h\times w$ satisfies the following determinantal expression:
\begin{multline*}
\fS_{1^h\times w}(x_1,\dots,x_{h+m})=\left[(x_1^{\lambda_1+2(h-1)}x_2^{\lambda_1+2(h-2)}\dots x_{h-1}^{\lambda_1+2} x_h^{\lambda_1})
\underline{(x_{h+1},\dots, x_{h+m})}^{\lambda}\right]\times\\
\times\det\left(\frac{\fS_{1\times\widehat w[i-1,j-1]}(x_j,x_{j+1},\dots,x_{2h+m-i})}{x_j^{\lambda_1+2h+2-i-j} \underline{(x_{j+1},\dots,x_{h+m})}^{\widehat\lambda[i-1,j-1]}}\right)_{i,j=1}^h
\end{multline*}
\end{theorem}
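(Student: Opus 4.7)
The plan is to reduce Theorem~\ref{thm:mainschubert} to Corollary~\ref{thm:flagged2} by translating each Schubert polynomial appearing in the statement into the corresponding flagged Schur polynomial via Wachs's theorem (Theorem~\ref{thm:wachs}). Since the monomial prefactors and the monomial denominators in the two formulas match term-by-term, everything comes down to two identifications: one for the permutation $1^h\times w$ on the left, and one for each permutation $1\times\widehat w[i-1,j-1]$ appearing as an entry of the determinant on the right.

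First I would handle the left-hand side. Because $w$ is dominant, it is vexillary with Lehmer code $\lambda$ and its flag is $b(w)=(1,2,\dots,m)$, as recalled in the subsection on dominant permutations. By the proposition on shifts of vexillary permutations, $1^h\times w$ is also vexillary, has the same Lehmer code $\lambda$, and has flag $b(1^h\times w)=(h+1,h+2,\dots,h+m)$. Wachs's theorem therefore yields
\[
\fS_{1^h\times w}(x_1,\dots,x_{h+m})=s_{\lambda}(b(1^h\times w))=s^{(h)}_\lambda(x_1,\dots,x_{h+m}),
\]
which is exactly the left-hand side of Corollary~\ref{thm:flagged2}.

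Next I would do the analogous identification for each matrix entry. Here I first invoke the fact that $(k,\ell)$-extension preserves dominance (verified combinatorially in the subsection defining $\widehat w[k,\ell]$) and produces a dominant permutation whose Lehmer code is precisely $\widehat\lambda[k,\ell]$ in the sense of Definition~\ref{extended2}. Applying the shift proposition once more, $1\times\widehat w[i-1,j-1]$ is vexillary with Lehmer code $\widehat\lambda[i-1,j-1]$ and flag $(2,3,\dots)$; by Wachs's theorem it equals the 1-flagged Schur polynomial $s^{(1)}_{\widehat\lambda[i-1,j-1]}$ in an appropriate set of variables. The polynomial being evaluated on the window $(x_j,x_{j+1},\dots,x_{2h+m-i})$ on both sides is just a relabeling of variables, so the equality of matrix entries is automatic once the shapes and flags are matched.

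Combining both identifications, each factor in the formula of Corollary~\ref{thm:flagged2} is replaced by the corresponding Schubert polynomial, yielding the claimed expression for $\fS_{1^h\times w}$. The only delicate point -- and the step I expect to need the most care -- is the bookkeeping that confirms $b(\widehat w[i-1,j-1])=(1,2,\dots)$ (so that its $1$-shift really is the $1$-flag) together with the check that the code of $\widehat w[k,\ell]$ agrees with $\widehat\lambda[k,\ell]$; both are straightforward from the pictorial description of the $(1,0)$- and $(0,1)$-extensions but deserve to be verified explicitly. Once these are in place, no further algebraic manipulation of the determinant is needed: Theorem~\ref{thm:mainschubert} is simply Corollary~\ref{thm:flagged2} rewritten under the dictionary provided by Wachs's theorem.
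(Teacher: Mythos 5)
Your proposal is correct and takes essentially the same route as the paper: the paper's entire proof is the single sentence ``This is a reformulation of Corollary~\ref{thm:flagged2},'' and your argument simply spells out that reformulation (matching $\fS_{1^h\times w}$ with $s^{(h)}_\lambda$ and each entry $\fS_{1\times\widehat w[i-1,j-1]}$ with $s^{(1)}_{\widehat\lambda[i-1,j-1]}$ via Wachs's theorem, the shift proposition, and the dominance-preserving extensions). Your explicit bookkeeping of codes and flags is more detail than the paper provides, but it is the same argument.
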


\begin{proof} This is a reformulation of Corollary~\ref{thm:flagged2}.
\end{proof}

\section{Schubert polynomials of Richardson permutations}\label{sec:speculations} In this section we discuss some corollaries of Theorem~\ref{thm:mainschubert} and formulate several conjectures concerning Schubert polynomials.

\subsection{Catalan numbers} Fix some notation. Let $w_0(n)=(n,n-1,\dots,2,1)\in S_n$ be the element of maximal length. It is dominant, and the corresponding Young diagram is the \emph{staircase shape} $\Lambda_n=(n-1,n-2,\dots,2,1)$.

Recall that the $n$-th Catalan number is equal to the number of subdiagrams of the staircase shape $\Lambda_n$. This number has several different $q$-generalizations. Let $\Cat_q(n)$ denote the $n$-th \emph{Carlitz--Riordan $q$-Catalan number}, obtained as the weighted sum of all such subdiagrams, where a subdiagram $\mu$ has weight $q^{\frac{n(n-1)}{2}-|\mu|}$:
\[
\Cat{}_q(n)=\sum_{\mu\subset\Lambda_n} q^{\frac{n(n-1)}{2}-|\mu|}.
\]

Here is a first observation, made by Alex Woo~\cite{Woo04}.

\begin{theorem}[\cite{Woo04}] Let $w=1\times w_0(n)$. Then the principal specialization of $\fS_w$ equals the  $n$-th Carlitz--Riordan $q$-Catalan number times a power of $q$:
\[
\fS_w(1,q,q^2,q^3,\dots)=q^{\binom{n}{3}} \Cat{}_q(n).
\]
\end{theorem}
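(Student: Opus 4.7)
The plan is to unfold $\fS_{1\times w_0(n)}$ as a $1$-flagged Schur polynomial of staircase shape and then specialize variables directly. First I would note that $w_0(n)$ is dominant with Lehmer code $\Lambda_n = (n-1, n-2, \dots, 1)$, and since every row of $\Lambda_n$ is strictly shorter than the preceding one, the recipe from the subsection on dominant permutations gives the flag $b(w_0(n)) = (1, 2, \dots, n-1)$. The proposition on shifts of permutations then shows that $1 \times w_0(n)$ is vexillary with the same Lehmer code $\Lambda_n$ and shifted flag $(2, 3, \dots, n) = (1+1, 1+2, \dots, 1+(n-1))$, which is exactly the $1$-flag of shape $\Lambda_n$. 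Theorem~\ref{thm:wachs} therefore gives $\fS_{1\times w_0(n)} = s^{(1)}_{\Lambda_n}$.

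Next, using the description from Example~\ref{1-flagged}, I would sum over subdiagrams $\mu \subset \Lambda_n$: the $1$-flagged tableau $T_\mu$ associated with $\mu$ has, in row $i$, exactly $\mu_i$ entries equal to $i$ followed by $(n-i) - \mu_i$ entries equal to $i+1$, so its monomial is
\[
M(T_\mu) = \prod_{i=1}^{n-1} x_i^{\mu_i}\, x_{i+1}^{(n-i) - \mu_i}.
\]
Hence $\fS_{1\times w_0(n)} = \sum_{\mu \subset \Lambda_n} M(T_\mu)$.

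The final step is the principal specialization $x_k = q^{k-1}$. A short bookkeeping argument -- the ``inner'' contributions $(i-1)\mu_i - i\mu_i$ collapse to $-\mu_i$ -- gives
\[
M(T_\mu)\bigl|_{x_k = q^{k-1}} = q^{\sum_{i=1}^{n-1} i(n-i) \;-\; |\mu|}.
\]
Since $\sum_{i=1}^{n-1} i(n-i) = \binom{n+1}{3}$ and $\binom{n+1}{3} - \binom{n}{2} = \binom{n}{3}$ by Pascal's identity, summing over all subdiagrams $\mu \subset \Lambda_n$ yields
\[
\fS_{1\times w_0(n)}(1, q, q^2, \dots) \;=\; q^{\binom{n}{3}} \sum_{\mu \subset \Lambda_n} q^{\binom{n}{2} - |\mu|} \;=\; q^{\binom{n}{3}} \Cat{}_q(n),
\]
as claimed. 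I do not foresee any serious obstacle here: the only nontrivial conceptual input is the identification of a Schubert polynomial with a $1$-flagged Schur polynomial, which is already Wachs's theorem applied to the vexillary permutation $1 \times w_0(n)$; what remains is clean arithmetic with exponents of $q$, with the telescoping $(i-1)\mu_i - i\mu_i = -\mu_i$ being the only place where one has to pay attention.
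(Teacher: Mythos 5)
Your argument is correct. Note that the paper itself gives no proof of this statement --- it is quoted from Woo's note \cite{Woo04} --- so there is nothing internal to compare against; what you have written is a clean, self-contained derivation assembled entirely from the paper's own machinery. The chain of identifications checks out: $w_0(n)$ is dominant with code $\Lambda_n$ and (effective) flag $(1,2,\dots,n-1)$, the shift proposition turns this into the flag $(2,3,\dots,n)$ for $1\times w_0(n)$, and Theorem~\ref{thm:wachs} then gives $\fS_{1\times w_0(n)}=s^{(1)}_{\Lambda_n}$ --- exactly the identification the paper records in the sentence following Corollary~\ref{cor:woo}. The monomial of the tableau $T_\mu$ and the exponent bookkeeping are also right: under $x_k=q^{k-1}$ one gets the exponent $\sum_{i=1}^{n-1} i(n-i)-|\mu|=\binom{n+1}{3}-|\mu|$, and $\binom{n+1}{3}-\binom{n}{2}=\binom{n}{3}$, which matches the normalization $q^{\binom{n}{2}-|\mu|}$ in the definition of $\Cat_q(n)$. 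The one conceptual input, as you say, is Wachs's theorem; everything else is the bijection of Example~\ref{1-flagged} between $1$-flagged tableaux and subdiagrams $\mu\subset\Lambda_n$, which is precisely how the Carlitz--Riordan $q$-Catalan number enters.
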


\begin{corollary}\label{cor:woo} The value of $\fS_w$ at the identity equals the $n$-th Catalan number:
\[
\fS_w(1,1,\dots)=\Cat(n).
\]
\end{corollary}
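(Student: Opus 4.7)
The plan is to obtain the corollary as the $q \to 1$ specialization of Woo's theorem, stated immediately above. First I would substitute $q = 1$ into both sides of the identity
\[
\fS_w(1,q,q^2,q^3,\dots) = q^{\binom{n}{3}} \Cat{}_q(n).
\]
The left-hand side evaluates to $\fS_w(1,1,1,\dots)$, and the prefactor $q^{\binom{n}{3}}$ on the right-hand side becomes $1$. Consequently the corollary reduces to the single identity $\Cat{}_1(n) = \Cat(n)$.

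For this remaining step I would unwind the definition of the Carlitz--Riordan $q$-Catalan number:
\[
\Cat{}_1(n) = \sum_{\mu \subset \Lambda_n} 1^{\binom{n}{2} - |\mu|} = \#\{\mu \mid \mu \subset \Lambda_n\},
\]
and invoke the classical fact, already recalled in the sentence immediately preceding the definition of $\Cat{}_q(n)$, that the number of Young subdiagrams of the staircase shape $\Lambda_n = (n-1,n-2,\dots,1)$ equals the $n$-th Catalan number. (The standard bijection sends a subdiagram $\mu \subset \Lambda_n$ to the Dyck path of semilength $n$ tracing its lower boundary.)

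There is essentially no obstacle here: the entire argument is a one-variable specialization of a theorem already in the paper, combined with the well-known enumeration of staircase subdiagrams. The only mildly delicate point is ensuring that the normalization of $\Cat{}_q(n)$ is the one in which the specialization $q = 1$ really produces $\Cat(n)$ and not a rescaling, but this is immediate from the formula $\sum_{\mu \subset \Lambda_n} q^{\binom{n}{2}-|\mu|}$ adopted just above, since every summand becomes $1$.
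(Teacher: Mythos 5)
Your proposal is correct and follows exactly the route the paper intends: specialize Woo's identity at $q=1$, note that $\Cat_1(n)=\sum_{\mu\subset\Lambda_n}1$ counts the subdiagrams of $\Lambda_n$, and invoke the fact (recalled just before the definition of $\Cat_q(n)$) that this count is the $n$-th Catalan number. No gaps.
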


In this case $\fS_w=s_{\Lambda_n}^{(1)}$ is the 1-flagged polynomial of the staircase partition.

\subsection{Catalan--Hankel determinants} Consider the same permutation $w_0(n)$ shifted by $h$ instead of 1. Theorem~\ref{thm:mainschubert} expresses $\fS_{1^h\times w_0}$ via $\fS_{1\times \widehat{w_0(n)}[k,\ell]}$. 

Note that $\widehat{w_0(n)}[k,\ell]=w_0(n+k+\ell)$, since the extension of a staircase diagram is again a staircase diagram. This means that all 	permutations occuring in the determinantal expression are $\fS_{1\times w_0(m)}$ for $m=n,\dots,n+2h-2$.

Specializing this identity at 1 and using Corollary~\ref{cor:woo}, we get the following proposition.
\begin{prop}\label{prop:catalanhankel} The following identity holds:
\[
\fS_{1^h\times w_0(n)}(1,1,\dots)=\begin{vmatrix} \Cat(n) & \Cat(n+1) &\dots & \Cat(n+h-1)\\ \Cat(n+1) & \Cat(n+2) &\dots & \Cat(n+h)\\ \vdots & \vdots & \ddots & \vdots \\
\Cat(n+h-1) & \Cat(n+h) &\dots & \Cat(n+2h-2)\end{vmatrix}.
\]
\end{prop}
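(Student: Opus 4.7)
The plan is to apply Theorem~\ref{thm:mainschubert} to the specific dominant permutation $w = w_0(n)$ and then specialize all variables to $1$. Recall that $w_0(n)$ is dominant with Lehmer code equal to the staircase partition $\Lambda_n = (n-1, n-2, \dots, 1)$; in particular $\lambda_1 = n-1$ and $m = n-1$. The first thing to verify is the combinatorial identity noted just before the proposition, namely that extending the staircase gives back a staircase. By Definition~\ref{extended2}, adding $i-1$ staircase rows on top of $\Lambda_n$ and $j-1$ staircase columns to its left produces exactly $\Lambda_{n+i+j-2}$, so $\widehat{w_0(n)}[i-1, j-1] = w_0(n+i+j-2)$. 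Hence every Schubert polynomial appearing as a matrix entry in Theorem~\ref{thm:mainschubert} is of the form $\fS_{1 \times w_0(n+i+j-2)}$.

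Next I would substitute $x_1 = x_2 = \cdots = 1$ in the identity of Theorem~\ref{thm:mainschubert}. The prefactor in square brackets is a monomial, so it evaluates to $1$. Each denominator in the matrix entries is a product of powers of $x_k$'s, and therefore also evaluates to $1$. Each numerator becomes
\[
\fS_{1 \times w_0(n+i+j-2)}(1, 1, \dots),
\]
which by Corollary~\ref{cor:woo} (Woo's theorem specialized at $q=1$) equals $\Cat(n+i+j-2)$. Consequently the $(i,j)$ entry of the matrix collapses to $\Cat(n+i+j-2)$, and the right-hand side becomes precisely the claimed Hankel determinant, whose first row is $\Cat(n), \Cat(n+1), \dots, \Cat(n+h-1)$ and whose last row is $\Cat(n+h-1), \dots, \Cat(n+2h-2)$.

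There is no substantial obstacle here: the statement is obtained by a direct specialization of Theorem~\ref{thm:mainschubert}, and the only nontrivial ingredients are (i) recognizing that the extension $\widehat{w_0(n)}[k,\ell]$ of the longest element remains a longest element $w_0(n+k+\ell)$, and (ii) invoking Woo's evaluation to rewrite $\fS_{1 \times w_0(m)}(1,1,\dots) = \Cat(m)$. The monomial prefactor and the monomial denominators are designed precisely so that the identity is homogeneous and the specialization at $\mathbf{x} = \mathbf{1}$ is well-defined and trivial. Thus the proof is essentially a one-line reference to Theorem~\ref{thm:mainschubert} combined with Corollary~\ref{cor:woo}.
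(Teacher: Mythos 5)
Your argument is correct and coincides with the paper's own proof: the authors likewise specialize Theorem~\ref{thm:mainschubert} at $w=w_0(n)$, observe that $\widehat{w_0(n)}[k,\ell]=w_0(n+k+\ell)$ because extending a staircase yields a staircase, set all variables to $1$ so that the monomial prefactor and denominators disappear, and invoke Corollary~\ref{cor:woo} to identify each entry with a Catalan number. No further comment is needed.
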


This determinant is a \emph{Hankel determinant} of the sequence of Catalan numbers; we will later refer to it as a \emph{Catalan--Hankel determinant}.

\begin{remark} Since $\fS_{1^h\times w_0(n)}$ is the flagged Schur polynomial $s_{\Lambda_n}^{(h)}$, we see from the definition of a flagged Schur polynomial that its value at 1 is equal to the number of \emph{plane partitions} with parts at most $h$ and shape $\Lambda_n$, i.e., three-dimensional Young diagrams inside a prism with base $\Lambda_n$ and height $h$. This observation appeared in~\cite[Sec.~1.2]{FominKirillov97}. 

It is also possible to establish an explicit bijection between the monomials in $\fS_{1^h\times w_0(h)}$, indexed by the so-called ``pipe dreams'' (see \cite{FominKirillov96}), and such plane partitions. See \cite{SerranoStump12} for the details.
\end{remark}

\subsection{Richardson elements in the symmetric group}

\begin{definition} Let $i_1<i_2<i_3<\dots<i_k=n$. Permutation $(i_1,i_1-1,\dots,2,1,i_2,i_2-1,\dots,i_1+1, i_3,\dots,i_2+1,\dots)\in S_n$ is called a \emph{Richardson} permutation.
\end{definition}

In different terms, a Richardson permutation is equal to the product of several permutations of type $1^h\times w_0(k)$ with nonintersecting supports. In particular, $w_0(n)$ and $1^h\times w_0(k)$ are Richardson permutations.

Richardson elements have a nice description in terms of algebraic groups: for a parabolic subgroup $P\subset \GL(n)$  consider its Levi decomposition: $P=L\ltimes U$. Then the longest element in the Weyl group $W_L$ of $L$ is a Richardson element in the Weyl group  $W\cong S_n$ of $\GL(n)$. 

Richardson permutations are not vexillary (unless they are shifted staircase permutations). However, they are obtained as products of several shifted staircase permutations with pairwise disjoint supports. So the Schubert polynomial of such a permutation is equal to the product of the Schubert polynomials of its factors.

Hence, Theorem~\ref{thm:mainschubert} shows that the Schubert polynomials of Richardson permutations can be presented as products of several determinants, and their specialization at 1 is equal to the product of several Catalan--Hankel determinants from Proposition~\ref{prop:catalanhankel}.

The study of Richardson permutations is motivated by the following question.

\begin{question} For a given $n$, find the permutations $w\in S_n$ such that $\fS_{w}(1,\dots,1)$ is as large as possible.
\end{question}

This question has a geometric interpretation: A.~Knutson and E.~Miller \cite{KnutsonMiller05} showed that the value $\fS_w(1,\dots,1)$ is equal to the degree of the \emph{matrix Schubert variety} corresponding to $w$. If $w$ satisfies the condition that, for every $(i,j)$ with $i+j>n$, either $(w_0w)^{-1}(i)\leq j$ or $w_0w(j)\leq i$, this degree equals the multiplicity of  the singular point $X_e\subset X_w$, where $X_w=\overline{B^- wB/B}\subset \GL(n)/B$ is the Schubert variety in a full flag variety corresponding to $w$. This multiplicity is a measure of ``how singular'' $X_w$ is; we are then trying to find the Schubert variety with the ``worst'' singularities (in the aforementioned sense).

These values were computed for all permutations with $n\leq 10$. It turned out that the maximum is always attained at a Richardson permutation. This allows us to formulate the following conjecture.

\begin{table}[h]
\begin{tabular}{||r|r|r||}
\hline
\hline
$n$& $w$ & $\fS_w(1,\dots,1)$\\
\hline
\hline
2 & (12) & 1\\
\hline
3 & (132) & 2\\
\hline
4 & (1432) & 5\\
\hline
5 & (15432) & 14\\
& (12543) & 14\\
& (21543) & 14\\
\hline
6 & (126543) & 84\\
& (216543) & 84\\
\hline
7 &  (1327654) &660\\
\hline
8 & (13287654) & 9438\\
\hline
9 & (132987654) & 163592\\
\hline
10 & (1,4,3,2,10,9,8,7,6,5) & 4424420\\
\hline
\hline
\end{tabular}
\caption{Permutations producing the largest values of $\fS_w(1,\dots,1)$}\label{lasttable}
\end{table}

\begin{conjecture} For a given $n$, the permutation $w\in S_n$ with the largest value of $\fS_w(1,\dots,1)$ is Richardson. 
\end{conjecture}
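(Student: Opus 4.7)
The statement is presented as a conjecture and supported only by computation up to $n=10$; so any plan is necessarily speculative. My plan combines the combinatorial interpretation of $\fS_w(1,\dots,1)$ with a local-exchange argument and the determinantal formula of Theorem~\ref{thm:mainschubert} to control the Richardson side.

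First I would switch to the pipe-dream model of Fomin--Kirillov, where $\fS_w(1,\dots,1)$ equals the number of reduced pipe dreams for $w$; the conjecture then reads: \emph{among $w\in S_n$, the number of reduced pipe dreams is maximized at some Richardson permutation}. On the Richardson side, any such $w$ is a product of shifted staircase permutations with disjoint supports, parametrized by a composition $(a_1,\dots,a_k)$ of $n$, and the disjoint-support factorization of Schubert polynomials together with Proposition~\ref{prop:catalanhankel} expresses $\fS_w(1,1,\dots)$ as an explicit product of Catalan--Hankel determinants. The question of which Richardson permutation attains the largest value then reduces to a finite, explicit optimization in these determinants, which can be attacked by standard identities for Hankel determinants of Catalan numbers and their Lindstr\"om--Gessel--Viennot interpretation in terms of non-intersecting Dyck paths.

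The hard part, and the main obstacle, is passing from an arbitrary $w\in S_n$ to a Richardson permutation without decreasing $\fS_w(1,\dots,1)$. The natural tool is a local move on permutations that weakly increases the number of reduced pipe dreams and whose fixed points are precisely Richardson. A reasonable candidate is an ``uncrossing'' operation: detect a box in the essential set of $w$ (in the sense of Fulton) lying strictly outside the diagonal union of maximal staircases, apply a carefully chosen elementary transposition on either side, and produce an injection of reduced pipe dreams by an LGV-style tail swap analogous to the one in Subsection~\ref{ssec:LGV}. An alternative, more geometric route would exploit the identification (Knutson--Miller) of $\fS_w(1,\dots,1)$ with the degree of the matrix Schubert variety $\overline{X_w}\subset\mathrm{Mat}_n$, and would try to construct a flat (Gr\"obner) degeneration whose special fibre is a union of matrix Schubert varieties of Richardson type, giving the required upper bound on the degree. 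Either route requires genuinely new combinatorial or geometric input: the data of Table~\ref{lasttable} shows that the set of maximizers does not follow a single pattern (for instance, $n=5$ has three maximizers while $n=6$ has only two), so any candidate exchange must be flexible enough to preserve these ties.
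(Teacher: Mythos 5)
There is a genuine gap, and you have in fact named it yourself: the passage from an arbitrary $w\in S_n$ to a Richardson permutation without decreasing $\fS_w(1,\dots,1)$ is not carried out. The ``uncrossing'' move is not defined, the injection of reduced pipe dreams is not constructed, and the proposed Gr\"obner degeneration onto a union of Richardson-type matrix Schubert varieties is not exhibited. A plan that says ``apply a carefully chosen elementary transposition'' and ``either route requires genuinely new combinatorial or geometric input'' is a research programme, not a proof. For what it is worth, the paper is in exactly the same position: the statement is presented as a conjecture, supported only by exhaustive computation for $n\leq 10$ (Table~\ref{lasttable}), and the authors explicitly write that they ``cannot see any reason why the permutation producing the largest value of its Schubert polynomial at 1 has to be Richardson.'' So you should not present this as a proof attempt at all; there is nothing in the paper to compare it against except the numerical evidence.

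That said, the ingredients you assemble on the Richardson side are accurate and match the paper's discussion: the disjoint-support factorization of Schubert polynomials, the reduction of $\fS_w(1,\dots,1)$ for Richardson $w$ to products of Catalan--Hankel determinants via Proposition~\ref{prop:catalanhankel}, and the pipe-dream interpretation of the principal specialization. Your observation that the maximizer is not unique for $n=5,6$ is a real constraint on any would-be monotone exchange argument and is worth keeping in mind, but it does not substitute for constructing one. As it stands, the conjecture remains open, and your proposal does not close it.
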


In Table~\ref{lasttable} we give the list of permutations $w$ corresponding to the largest value  of $\fS_w(1,\dots,1)$ and the values themselves for $n\leq 10$ (For $n=5,6$, there is more than one such permutation). This table was computed by I.~Kochulin~\cite{Kochulin12}).

However, this conjecture remains a purely experimental observation; so far we cannot see any reason why the permutation producing the largest value of its Schubert polynomial at 1 has to be Richardson.

\subsection*{Acknowledgements} E.S. was partially supported by RSCF grant 14-21-00053
within AG Laboratory NRU-HSE. 
This paper was completed during the second author's visit to the University of Warwick; he is grateful to this institution and to Professor Miles Reid for their warm hospitality.

\bibliographystyle{alpha}
\bibliography{schur}

\end{document}